\newcommand{\qed}{$\;\;\;\Box$}
\newenvironment{proof}{\par\smallbreak{\sl Proof.~}}
{\unskip\nobreak\hfill \qed \par\medbreak}
\newcounter{claim}
\renewcommand{\theclaim}{\arabic{claim}}
{\par\medskip\par}
\newcommand{\hide}[1]{}
\newcommand{\N}{{\mathbb N}}
\newcommand{\R}{{\mathbb R}}
\newcommand{\CC}{{\mathbb C}}
\newcommand{\A}{{\cal A}}
\newcommand{\B}{{\cal B}}
\newcommand{\LL}{{\cal L}}
\newcommand{\beq}{\begin{equation}}
\newcommand{\ee}{\end{equation}}
\renewcommand{\d}{\partial}
\newtheorem{thm}{Theorem}[section]
\newtheorem{lemma}[thm]{Lemma}
\newtheorem{prop}[thm]{Proposition}
\newtheorem{defn}[thm]{Definition}
\newtheorem{cor}[thm]{Corollary}
\newtheorem{rem}[thm]{Remark}
\newcommand{\al}{\alpha}
\newcommand{\be}{\beta}
\newcommand{\ga}{\gamma}
\newcommand{\de}{\delta}
\newcommand{\eps}{\varepsilon}
\newcommand{\vphi}{\varphi}
\newcommand{\la}{\lambda}
\newcommand{\om}{\omega}
\newcommand{\io}{\iota}
\newcommand{\reff}[1]{(\ref{#1})}
\newcommand{\sgn}{\mathop{\rm sgn}}
\newcommand{\diag}{\mathop{\rm diag}\nolimits}
\renewcommand{\Re}{\mathop{\mathrm{Re}}\nolimits}
\title{Finite Time Stabilization of
	Nonautonomous \\ First Order Hyperbolic Systems
}
\newcounter{thesame}
\author{
	Irina Kmit
	\thanks{Institute of Mathematics, Humboldt University of Berlin. On leave from the
		Institute for Applied Problems of Mechanics and Mathematics,
		Ukrainian National Academy of Sciences. {\small   E-mail:
			{\tt kmit@mathematik.hu-berlin.de}}}
	\ \ \ Natalya Lyul'ko \thanks{Sobolev Institute of Mathematics, Russian Academy of Sciences and
		Novosibirsk State University, Russia.
		{\small   E-mail:
			{\tt natlyl@mail.ru}}
}}
\date{}
\begin{document}
	
	\maketitle

	\begin{abstract}
		\noindent
	We address nonautonomous  	  initial  boundary value problems
for decoupled linear first-order one-dimensional hyperbolic systems,
investigating the phenomenon of finite time stabilization.
We establish sufficient and necessary conditions 
ensuring  that  solutions stabilize to zero in a finite time
for any initial $L^2$-data.	In the nonautonomous case we give a   combinatorial  criterion stating that the robust stabilization occurs
if and only if  the matrix of  reflection boundary coefficients 
corresponds to a
directed acyclic graph. An equivalent robust algebraic criterion is that the 	adjacency matrix of this graph is  nilpotent. In the autonomous  case we also provide   a spectral 
stabilization criterion, which is nonrobust with respect to perturbations of  the coefficients of the hyperbolic system.
\end{abstract} 
\emph{Key words:}  Nonautonomous  first-order hyperbolic systems,
Reflection boundary conditions, Finite time stabilization,
Stabilization criteria,
Robustness

\emph{Mathematics Subject Classification:}  35B40, 93D20, 93D40, 35L04, 37L15

\section{Problem setting  and main results}\label{sec:problem}
\renewcommand{\theequation}{{\thesection}.\arabic{equation}}
\setcounter{equation}{0}

The paper concerns the finite time stabilization property  
in the semistrip $\Pi=\{(x,t):\, 0\le x\le 1,\, 0\le t<\infty\}$
of  solutions to the $n\times n$-decoupled
first-order system
\begin{equation}\label{eq:1}
\partial_t u_j + a_j(x,t)\partial_x u_j + b_j(x,t)u_j = 0,\quad 0<x<1,\, t>0,\,
j\le n,
\end{equation}
endowed with the reflection boundary conditions
\begin{equation}\label{eq:3}
\begin{array}{l}
\displaystyle
u_j(0,t) = \sum\limits_{k=1}^mp_{jk}u_k(1,t)+\sum\limits_{k=m+1}^np_{jk}u_k(0,t),\quad t\ge0,\,
1\le j\le m,
\\
\displaystyle
u_j(1,t) =\sum\limits_{k=1}^mp_{jk}u_k(1,t)+\sum\limits_{k=m+1}^np_{jk}u_k(0,t), \quad t\ge0,\,
m< j\le n,
\end{array}
\end{equation}
and the initial conditions
\begin{eqnarray}
u_j(x,0) = \varphi_j(x), \quad 0\le x\le 1,\,j\le n.
\label{eq:in}
\end{eqnarray}
Here  $n\ge 2$ and $0\le m\le n$ are fixed integers.
The unknown function $u=(u_1,\ldots,u_n)$ and the initial function
$\vphi=(\vphi_1,\dots,\vphi_n)$ are vectors of real-valued functions.
The coefficients $a_j$ and $b_j$ are
real-valued functions and the $n\times n$-matrix $P=(p_{jk})$ of the reflection boundary coefficients
has real entries.
The functions $a_j$  are supposed to satisfy the following conditions:
\begin{equation}\label{eq:h1}
\begin{array}{ll}
\inf\left\{a_j(x,t)\,:\,(x,t)\in\Pi, 1\le j\le m\right\}\ge a,\\ [2mm]
\sup\left\{a_j(x,t)\,:\,(x,t)\in\Pi, m+1\le j\le n\right\}\le -a
\end{array}
\end{equation}
for some $a>0$.

The purpose of the paper is to identify a class of  boundary conditions of the type \reff{eq:3}
ensuring that all solutions to the problem \reff{eq:1}--\reff{eq:in} 
stabilize to zero in a finite time not depending on the
initial data. To this end,  we establish several stabilization criteria
in terms of the reflection boundary coefficients and the coefficients of the hyperbolic system
(irrespectively of the initial data). A robust combinatorial criterion will be expressed in terms of a directed 
graph $G_P$ associated with the matrix~$P$.
Robust algebraic criteria will be given in terms  of the adjacency matrix of  $G_P$ or in terms of the matrix $P$ itself.
Moreover, we generalize these results to the case of nonautonomous boundary conditions.
For  autonomous problems we also
give a  nonrobust criterion in terms of spectral properties of the infinitesimal generator of the semigroup generated by the
autonomous problem~\reff{eq:1}--\reff{eq:in}. 

We have chosen to work in the $L^2$-setting, where the existence of $L^2$-generalized solutions  is
proved in \cite{KmLyul}. This gives us the advantage that the stabilization
criteria established  in this paper apply as well to  solutions
of better regularity. It should be noted that they also remain to be true for  solutions of worth regularity. In particular, for the
strongly singular delta-wave solutions the stabilization phenomenon can  easily be shown to
follow from the smoothing property proved  in \cite[Theorem 4.5]{Km}.

For a Banach space  $X$, the $n$-th Cartesian power $X^n$ is considered to be a Banach space normed by
$$
\|u\|_{X^n}= \max_{j\le n} \|u_j\|_{X},
$$
where $u=(u_1,\dots,u_n)$ with each $u_j\in X$.
By $C_0^\infty([0,1])$ we denote a subspace of the vector space $C^\infty([0,1])$ of
functions with  support within $(0,1)$.

Suppose that the coefficients of  \reff{eq:1}  fulfill the following regularity assumptions:
\begin{equation}\label{ss44}
\begin{array}{l}
\mbox{The functions } a_j, b_{j} \mbox{ belong to } C^1(\Pi)   
\mbox{ and are bounded}\\
\mbox{in $\Pi$ together with their first order derivatives}.
\end{array}
\end{equation}
It should be noted that the boundedness assumption on 
$a_j$ and $b_{j}$
and the
$C^1$-assumption  on $b_j$ are not crucial for the results of the present paper. The former 
can be dropped without loss of generality, while  the latter can be weakend 
to $b_j\in C^{0,1}_{x,t}(\Pi)$ or  $b_j\in C^{1,0}_{x,t}(\Pi)$ accordingly to the solution concept given by Definition \ref{defn:sol}, or even to $b_j\in C(\Pi)$
if one would use the solution concept as in \cite[Definition A.1]{Coron2021}.
The assumptions \reff{ss44} are imposed to simplify the presentation
(in particular,  they are supposed in the relevant result in  \cite{KmLyul} that we cite 
as
Theorem \ref{evol} below). 

As it follows from \cite[Theorem 3.1]{ijdsde}, for any continuously differentiable
initial function $\vphi$
satisfying the zero-order and the first-order
compatibility conditions between \reff{eq:3} and \reff{eq:in}
(in particular, for $\vphi\in C_0^\infty([0,1])^n$), the problem
\reff{eq:1}--\reff{eq:in} has a unique classical  $C^1$-solution in $\Pi$.
We now introduce a notion of the $L^2$-generalized  solution, which is 
analogous to that introduced in 
\cite[\S 29]{vladimirov1971} for  initial-boundary value problems
for an equation of the hyperbolic type.
\begin{defn}\cite[Definition 4.3]{KmLyul}\label{defn:sol}
	Let  $\vphi\in L^2(0,1)^n$.
	A function $u\in C\left([0,\infty), L^2(0,1)\right)^n$ is called
	an {\it $L^2$-generalized  solution} to the problem \reff{eq:1}--\reff{eq:in}
	if for any sequence $\vphi^l\in C_0^\infty([0,1])^n$ with
	$\vphi^l\to\vphi$ in $L^2(0,1)^n$
	the sequence
	of classical  $C^1$-solutions $u^l(x,t)$ to the problem
	\reff{eq:1}--\reff{eq:in} with
	$\vphi$ replaced by $\vphi^l$  fulfills the convergence condition 
	$$
	\|u(\cdot,t)-u^l(\cdot,t)\|_{L^2(0,1)^n} \to_{l\to\infty} 0,
	$$
	uniformly in $t$ varying in the range $0\le t\le T$, for each $T>0$.
\end{defn}

The following theorem  is obtained via  extension by continuity. 
Generally speaking, the proof method is based on the classical result
\cite[Theorem 2 in Section V.8.2]{kant} stating that,
for given normed spaces  $ X $ and $ Y$ such that 
$ Y $ is complete, any linear continuous operator $Q_0 $ from 
$\Omega \subset X $ to $ Y $ admits a unique linear continuous extension~$Q$ to the closure $\overline{\Omega} $ of  $\Omega$, and 
$\|Q\| =  \|Q_0\|$.

\begin{thm}\cite[Theorem 2.3]{KmLyul}\label{evol}
	Suppose that the conditions \reff{eq:h1} and \reff{ss44} are 
	fulfilled.Then, given
	$\vphi\in L^2(0,1)^n$, there exists a unique
	$L^2$-generalized  solution $u$
	to the problem~(\ref{eq:1})--(\ref{eq:in}).
\end{thm}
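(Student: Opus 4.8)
The plan is to obtain $u$ as the unique continuous extension of the classical solution operator, exactly along the scheme of \cite[Theorem 2 in Section V.8.2]{kant} quoted above. First I would fix $T>0$ and let $Q_0\colon\vphi\mapsto u$ be the map assigning to $\vphi\in C_0^\infty([0,1])^n$ the classical $C^1$-solution of \reff{eq:1}--\reff{eq:in} furnished by \cite[Theorem 3.1]{ijdsde}, regarded as an element of the Banach space $Y=C([0,T],L^2(0,1))^n$. Since $\Omega=C_0^\infty([0,1])^n$ is dense in $X=L^2(0,1)^n$ and $Y$ is complete, it suffices to check that $Q_0$ is linear and bounded with respect to the $L^2(0,1)^n$-norm on its domain; the cited theorem then produces a unique continuous extension $Q\colon X\to Y$, and the convergence requirement in Definition~\ref{defn:sol} is precisely the statement that $Q\vphi$ is the $L^2$-generalized solution.

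Linearity of $Q_0$ is immediate from the linearity of \reff{eq:1}--\reff{eq:in}, so everything reduces to the a priori estimate
\[
\sup_{0\le t\le T}\|u(\cdot,t)\|_{L^2(0,1)^n}\le C(T)\,\|\vphi\|_{L^2(0,1)^n}
\]
for classical solutions with smooth data. I would establish it by the method of characteristics. Along the $j$-th characteristic $\dot x=a_j(x,t)$ the equation \reff{eq:1} becomes the ODE $\dot u_j=-b_ju_j$, so $u_j$ at a point equals its value at the foot of the characteristic times $\exp\bigl(-\int b_j\,ds\bigr)$, a factor bounded by $e^{T\|b_j\|_{L^\infty(\Pi)}}$ thanks to \reff{ss44}. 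The foot lies either on the initial line, where the datum is $\vphi$, or on a lateral boundary, where the trace is prescribed by \reff{eq:3}. Because \reff{eq:h1} forces $|a_j|\ge a>0$ throughout $\Pi$, the characteristic flow is a bi-Lipschitz change of variables whose Jacobian is bounded above and below in terms of $a$ and $\|\d_x a_j\|_{L^\infty(\Pi)}$; hence passing from $\int_0^1 u_j(x,t)^2\,dx$ to an integral of squared foot-values costs only a bounded constant.

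The main obstacle is the boundary bookkeeping. The lateral trace feeding a characteristic is, via \reff{eq:3}, the $j$-th entry of $Pw(\tau)$, where $w(\tau)$ is the vector of outgoing traces at a reflection time $\tau<t$, so the representation is recursive: an outgoing trace is itself transported from an earlier incoming trace or from the initial data. The decisive structural fact, again a consequence of \reff{eq:h1}, is that every characteristic crosses the strip $0\le x\le1$ in time at least $1/\sup_j\|a_j\|_{L^\infty(\Pi)}$, so within $[0,T]$ it undergoes at most $\lceil T\sup_j\|a_j\|_{L^\infty(\Pi)}\rceil$ reflections. Unwinding the recursion to that finite depth, each reflection contributes a bounded amplification governed by $\|P\|$, the exponential $b_j$-factor, and the Jacobian bound, and there are only boundedly many terms; collecting them yields the estimate with a $C(T)$ growing at most exponentially in $T$. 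The $L^2$-stability of the lateral traces (a ``hidden regularity'' statement) is what must be handled with care here, and it is the sign condition \reff{eq:h1} that makes it available.

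Finally, the extension theorem applied on each window $[0,T]$ produces continuous operators that agree on overlaps by their own uniqueness, so they patch into a single $u\in C([0,\infty),L^2(0,1))^n$. Uniqueness of the $L^2$-generalized solution is inherited directly from the uniqueness clause of the cited extension theorem, since any admissible $u$ must coincide on every $[0,T]$ with $\lim_{l}u^l=Q\vphi$.
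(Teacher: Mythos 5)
Your proposal is correct and takes essentially the same route as the paper: the paper obtains this theorem by extension by continuity via the Kantorovich--Akilov extension theorem from the dense subspace $C_0^\infty([0,1])^n$, the required boundedness being exactly the a priori estimate $\|u(\cdot,t)\|_{L^2(0,1)^n}\le K_0e^{C_0 t}\|\vphi\|_{L^2(0,1)^n}$ cited later from \cite[Lemma 4.2]{KmLyul}, which is what your characteristics argument establishes. The only difference is one of detail, not of method: you sketch the proof of that estimate (finitely many reflections, Jacobian and reflection-matrix bounds), whereas the paper defers it to \cite{KmLyul}.
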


\begin{defn}\cite[Definition 4.3]{KmLyul}\label{defn:stab}\rm
	Problem \reff{eq:1}--\reff{eq:in} is said to be   {\it Finite Time Stabilizable (FTS)} if
	there exists  a positive real $T_e$  such that
	for every $\varphi\in L^2(0,1)^n$ the $L^2$-generalized  solution
	to \reff{eq:1}--\reff{eq:in} is a constant zero function for all  $t>T_e$. The value of
	$T_e$ is called   the {\it finite time stabilization}.
	The infimum value of all  $T_e$ with the
	above property is called the {\it optimal  stabilization time} and is denoted by  $T_{opt}$.
\end{defn}

\begin{defn}\label{def:robust}\rm
	The problem \reff{eq:1}--\reff{eq:in} is {\rm robust FTS} if it is FTS for any
	$a_j$ and $b_j$  satisfying \reff{eq:h1} and \reff{ss44}.
\end{defn}

First we provide a spectral FTS criterion  for the {\it autonomous} version  of the problem
\reff{eq:1}--\reff{eq:in}, when $a_j(x,t)\equiv a_j(x)$
and $b_j(x,t)\equiv b_j(x)$. Introduce diagonal matrices $A(x)=\diag(a_1,\dots,a_n)$ and $B(x) = \diag(b_1,\dots,b_n)$ and
write down the problem \reff{eq:1}--\reff{eq:in}
as the abstract Cauchy problem in  $L^2(0,1)^n$ in the following form:
\begin{equation}\label{ns0}
\begin{array}{ll}
\displaystyle\frac{d}{dt}u(t)=\A u(t),  \quad
u(0)=\varphi\in L^2(0,1)^n,
\end{array}
\end{equation}
where the operator
$ \A: D(\A)\subset L^2(0,1)^n\mapsto L^2(0,1)^n$ is defined by
\beq\label{A}
\displaystyle \left(\A v\right)(x)=-A(x)\frac{d v}{d x} - B(x)v
\ee
and
\beq\label{AAA}
D(\A)=\{v\in L^2(0,1)^n\,:\,\d_xv\in L^2(0,1)^n,\,v_{out}=
Pv_{in}\}.
\ee
Here
$$
\begin{array}{cc}
v_{out}=(v_1(0),...,v_m(0),v_{m+1}(1),...,v_n(1)),\\ [2mm]
v_{in}=(v_1(1),...,v_m(1),v_{m+1}(0),...,v_n(0)).
\end{array}
$$
\begin{thm}\label{tm_2}
	The autonomous problem \reff{eq:1}--\reff{eq:in} is FTS
	if and only if the spectrum of the operator
	$\A$ is empty.
\end{thm}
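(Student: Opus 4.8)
The plan is to realize solutions of the autonomous problem as orbits of the $C_0$-semigroup $T(t)$ generated by $\A$, so that $T(t)\vphi=u(\cdot,t)$ is the $L^2$-generalized solution furnished by Theorem~\ref{evol}, and to read off from Definition~\ref{defn:stab} that FTS means exactly $T(t)=0$ for all $t>T_e$, i.e. that $T(t)$ is eventually the zero operator. The theorem then asserts that, for this particular generator, eventual nilpotency of the semigroup is equivalent to $\spec(\A)=\emptyset$. I would stress at the outset that the nontrivial implication cannot come from abstract semigroup theory: the spectral mapping theorem may fail, and emptiness of the spectrum of a generator does not in general force eventual nilpotency of the semigroup; the special structure of $\A$ (finitely many transport channels coupled only through $P$) is what makes the equivalence hold.

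For the direction ``FTS $\Rightarrow\spec(\A)=\emptyset$'' I would argue on the Laplace transform. If $T(t)=0$ for $t>T_e$, set $F(\la)=\int_0^{T_e}e^{-\la t}T(t)\,dt$, a bounded operator depending holomorphically on $\la\in\CC$, the integral running over a compact interval. Integration by parts, using $\frac{d}{dt}T(t)\vphi=T(t)\A\vphi$ together with the endpoint values $T(0)\vphi=\vphi$ and $T(T_e)\vphi=0$, yields $F(\la)(\la-\A)\vphi=\vphi$ and $(\la-\A)F(\la)\vphi=\vphi$ for every $\la\in\CC$. Hence $\la\in\rho(\A)$ for all $\la$, so $\spec(\A)=\emptyset$.

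For the converse I would first make the spectrum explicit. Solving the resolvent equation $(\la-\A)v=g$ channel by channel, each component is transported along its characteristic with an exponential weight, so the boundary traces satisfy $v_{in}=D(\la)v_{out}$ with the diagonal matrix $D(\la)=\diag\bigl(\kappa_j e^{-\la\theta_j}\bigr)$, where $\theta_j>0$ is the transit time of the $j$-th characteristic across $[0,1]$ and $\kappa_j\neq0$ the associated exponential weight. Combined with the boundary law $v_{out}=Pv_{in}$, this shows that $(\la-\A)v=g$ is uniquely and boundedly solvable in $L^2(0,1)^n$ precisely when $\det\bigl(I-PD(\la)\bigr)\neq0$, so that $\spec(\A)=\{\la:\det(I-PD(\la))=0\}$ with no continuous part. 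Expanding, $\det(I-PD(\la))=\sum_{S}(-1)^{|S|}\det(P_S)\bigl(\prod_{j\in S}\kappa_j\bigr)e^{-\la\sum_{j\in S}\theta_j}$ is a finite exponential sum with constant term $1$ (from $S=\emptyset$); being entire of exponential type, if it has no zeros then by Hadamard factorisation it must reduce to the constant $1$. The crucial gain is that $\det(I-PD(\la))\equiv1$ forces $(I-PD(\la))^{-1}=\mathrm{adj}(I-PD(\la))$ to be itself a finite exponential sum, hence entire of finite exponential type and bounded on each vertical line.

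To turn this into eventual vanishing of $T(t)$ I would pass to the time-dependent boundary traces. Writing $y(t)$ for the $v_{out}$-trace and $z(t)$ for the $v_{in}$-trace of the solution, transport along characteristics gives $z_j(t)=\kappa_j y_j(t-\theta_j)$ for $t>\theta_j$, with a source $g(t)$ coming from the initial data and supported in a bounded time window, while the boundary conditions give $y=Pz$; the interior solution is reconstructed from these traces by transport. Taking Laplace transforms yields $Y(\la)=(I-PD(\la))^{-1}G(\la)$ with $G$ entire of finite exponential type and square-integrable on vertical lines, since the initial data feed the traces only over a bounded time window. As $(I-PD(\la))^{-1}$ is now a finite exponential sum, $Y$ is entire of finite exponential type and $L^2$ on vertical lines, so the Paley--Wiener theorem identifies $y$ as a function of compact support in time; hence the traces, and with them $u(\cdot,t)$, vanish for all $t$ beyond a fixed $T_e$ independent of $\vphi$, which is FTS. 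The step I expect to be the main obstacle is the rigorous passage between the PDE and this finite-dimensional trace description in the $L^2$-setting: justifying the trace relations $z_j(t)=\kappa_j y_j(t-\theta_j)$ and $y=Pz$, establishing the $L^2$-boundedness of the resolvent solution operator that identifies $\spec(\A)$ with the zero set of $\det(I-PD(\la))$, and carrying out the Paley--Wiener argument with several distinct delays $\theta_j$. The purely algebraic core---that a zero-free exponential sum with unit constant term collapses to $1$, making $(I-PD(\la))^{-1}$ a finite exponential sum---is by contrast the clean engine driving the hard implication.
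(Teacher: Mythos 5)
Your proposal is correct, and its core engine is the same as the paper's: identify the spectrum with the zero set of the characteristic determinant $\Delta(\la)=\det\bigl(I-\diag(e^{-\la\tau_j})P\bigr)$ (Lemma \ref{lem_1}), show that zero-freeness of this finite exponential sum forces $\Delta\equiv 1$, observe that then the inverse matrix is itself a finite exponential sum, and invert the Laplace transform to get vanishing after a fixed time. Within that shared architecture you make three variant choices worth comparing. First, for necessity you extend the resolvent entirely via the truncated Laplace transform $\int_0^{T_e}e^{-\la t}T(t)\,dt$ of the eventually-zero semigroup; the paper instead exhibits a never-vanishing solution $e^{\la t}y(x,\la)$ from an eigenfunction, which presupposes (via the cited Kato result) that every spectral point is an eigenvalue --- your argument is more self-contained in that it needs no such discreteness. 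Second, you deduce ``zero-free $\Rightarrow\ \equiv 1$'' from Hadamard factorization (note you must still use that all exponents $r_k$ are positive, so $\Delta\to 1$ as $\Re\la\to+\infty$, to kill the factor $e^{a\la+b}$), whereas the paper invokes Levin's theorem on zeros of Dirichlet polynomials (Remark \ref{rem_1}); these are interchangeable. Third, for sufficiency you pass to the boundary traces, view them as a linear delay system, and apply Paley--Wiener to $Y(\la)=(I-PD(\la))^{-1}G(\la)$; the paper instead Laplace-transforms the interior equation directly, solves $X(\la)c=b$ explicitly, and applies its hands-on contour-integral Lemma \ref{lem_2}, which yields an explicit stabilization time $T$ in terms of the $\al_j$ and the exponents $\mu_k$. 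Your trace/delay route is essentially the alternative sketched in the paper's closing remark of Section 3.1 (attributed to a reviewer) and in the remark following Lemma \ref{equiv1}; it is cleaner and more standard, at the cost of a less explicit bound on $T_e$ and of the technical work you yourself flag --- justifying the trace relations and the $L^2$-boundedness of the resolvent --- which the paper's explicit solution formulas \reff{nn44}--\reff{nn50} handle directly.
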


It should be emphasized that the criterion stated in Theorem
\ref{tm_2}
is not robust (see Subsection \ref{exk1}),
which is disadvantagable from the viewpoint of applications. To provide robust criteria,
with the matrix $P$ we associate the following directed graph~$G_P$:
\begin{itemize}
	\item
	$\{1,\dots,n\}$ is the vertex set of $G_P$,
	\item
	two vertices $j$ and $k$ are connected   by the arrow $(j,k)$ in $G_P$  if and only if  $p_{jk}\ne 0$.
\end{itemize}
Let us recall some notions from graph theory (see, e.g., \cite{Harary}).
A graph is  {\it directed } if its vertices are connected by edges
having  directions from one vertex to the other.
Formally, a directed graph  $G$ on a vertex set $V$ is determined by 
its edge set $E\subseteq V^2$, where $(i,j)\in E$ is a directed edge
(arrow) from vertex $i$ to vertex $j$. Let $l\ge 1$.
A {\it cycle of length $l$} in $G$ is 
a  sequence of pairwise distinct vertices $(k_1,\dots, k_l)$ such that  $(k_s,k_{s+1})\in E$ for all $s< l$ and $(k_l,k_1)\in E$.
Two cycles $(k_1,\ldots,k_l)$ and $(k'_1,\ldots,k'_l)$
are considered to be equal if they can be obtained from each other
by a cyclic shift or, equivalently, if the sets of their arrows
$\{(k_1,k_2),\ldots,(k_l,k_1)\}$ and $\{(k'_1,k'_2),\ldots,(k'_l,k'_1)\}$
are equal.
An {\it acyclic directed graph} is a directed graph having no cycles. 
For a directed graph on the vertex set $\{1,\dots,n\}$, the {\it adjacency matrix} is the
$n\times n$-matrix $W=(w_{jk})$ such that $w_{jk}$ is one when there is an arrow from vertex $j$ to vertex $k$ and zero otherwise.

The criteria in Theorems \ref{tm_5},  \ref{prl}, \ref{prl1}, and \ref{ss22} below
are stated for the {\it nonatono\-mous}
system \reff{eq:1}. 
We begin with a combinatorial criterion.

\begin{thm}\label{tm_5}
	The problem \reff{eq:1}--\reff{eq:in} is robust FTS if and only if the directed graph $G_P$ 
	is  acyclic.
\end{thm}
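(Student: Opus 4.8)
The plan is to translate the PDE problem into a delay recursion for the boundary traces and then read off finite-time vanishing from the combinatorics of $G_P$. Writing $\text{entry}_j$ and $\text{exit}_j$ for the trace of $u_j$ at its inflow and outflow endpoints (so that $\text{entry}_j=u_j(0,\cdot)$, $\text{exit}_j=u_j(1,\cdot)$ for $j\le m$ and the reverse for $j>m$), the boundary conditions \reff{eq:3} read uniformly as $\text{entry}_j(t)=\sum_{k}p_{jk}\,\text{exit}_k(t)$, i.e. $v_{out}=Pv_{in}$, while integrating \reff{eq:1} along characteristics gives $\text{exit}_k(t)=g_k(t)\,\text{entry}_k\!\big(t-c_k(t)\big)$, where $c_k(t)\in(0,1/a]$ is the crossing time (bounded by $1/a$ because $|a_k|\ge a$) and $g_k$ is the nonvanishing transport factor produced by $b_k$. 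Hence the traces obey $\text{entry}_j(t)=\sum_k p_{jk}g_k(t)\,\text{entry}_k\big(t-c_k(t)\big)$, and since every interior value equals a transported entry value, FTS is equivalent to all these traces vanishing for $t>T_e$.

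For the ``if'' direction I would assume $G_P$ acyclic and propagate zeros along a topological order. Assign to each vertex $j$ the length $\ell(j)$ of the longest directed path issuing from $j$; acyclicity gives $\ell(j)\le n-1$ and $\ell(j)>\ell(k)$ whenever $(j,k)\in E$. A sink has $p_{jk}=0$ for all $k$, so $\text{entry}_j\equiv0$ and $\text{exit}_j$ vanishes after one crossing, i.e. for $t>1/a$. Inductively, if every out-neighbour $k$ of $j$ has $\text{exit}_k(t)=0$ for $t>T$, then $\text{entry}_j(t)=0$ for $t>T$ and $\text{exit}_j(t)=0$ for $t>T+1/a$; thus $\text{exit}_j$ vanishes for $t>(\ell(j)+1)/a$. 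Because the transport factors $g_k$ never enter this argument---one only propagates exact zeros---the bound $T_e\le (n+1)/a$ holds for every admissible choice of $a_j,b_j$, which is exactly robust FTS.

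For the ``only if'' direction I argue contrapositively: given a cycle, I exhibit one admissible choice of coefficients for which FTS fails. Take $b_j\equiv 0$ (so $g_k\equiv1$) and constant speeds with crossing times $c_1,\dots,c_n\in(0,1/a]$ that are rationally independent. Writing $S_\tau$ for the shift $(S_\tau f)(t)=f(t-\tau)$ and $D=\diag(S_{c_1},\dots,S_{c_n})$, the recursion becomes $e=PD\,e$, and iterating, the $(j,j)$-entry of $(PD)^{N}$ is a sum over closed walks of length $N$ at $j$ of $\big(\prod_{\text{edges}}p\big)\,S_{(\text{sum of }c\text{ along the walk})}$. Along the given cycle $(k_1,\dots,k_l)$ the walk that winds $r$ times contributes the nonzero coefficient $\big(\prod_s p_{k_sk_{s+1}}\big)^{r}$ at the shift $S_{r(c_{k_1}+\dots+c_{k_l})}$; by rational independence this total delay is attained by no other walk, so no cancellation can occur and $(PD)^{N}\ne0$ for all $N$. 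Concretely, feeding in a narrow pulse supported near the inflow endpoint of $k_1$ produces a copy that circulates around the cycle and reappears, scaled by the nonzero loop gain, at times $r(c_{k_1}+\dots+c_{k_l})$; incommensurability keeps its support disjoint from every interfering pulse, so the solution is nonzero for all finite $t$ and the problem is not FTS for these coefficients.

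The main obstacle is this non-cancellation step in the ``only if'' direction: a single nonzero entry $p_{jk}$ never disappears, but with commensurate delays---e.g. all crossing times equal, where the dynamics collapses to $e(t)=Pe(t-1)$---the circulating signal can be annihilated by destructive interference exactly when $P$ happens to be nilpotent despite $G_P$ having a cycle. Choosing rationally independent crossing times is precisely what separates the around-the-cycle contribution in the delay variable and rules out such cancellations; verifying that the resulting pulse genuinely survives in the $L^2$-generalized solution (and that admissible $\varphi$ can excite it) is the technical heart of the argument.
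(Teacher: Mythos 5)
Your sufficiency argument is sound and is essentially the paper's own mechanism (Lemma \ref{equiv1}) in different bookkeeping: propagating exact zeros along a topological order of the acyclic graph is the same as showing that an iterate of the reflection--transport operator annihilates every solution, and it correctly yields a robust bound on the stabilization time independent of $a_j,b_j$. The genuine gap is in the necessity direction, at exactly the step you call the technical heart. Rational independence of the crossing times $c_1,\dots,c_n$ does \emph{not} rule out cancellation: the total delay of a closed walk is $\sum_j m_j c_j$, where $m_j$ counts how many times the walk enters vertex $j$, so incommensurability only separates walks with \emph{different vertex-visit multisets}. Two distinct closed walks can have identical multisets while using different edges, and their products can then interfere destructively for every choice of the $c_j$. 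Concretely, take $n=3$, $p_{12}=p_{23}=p_{31}=p_{13}=p_{32}=1$, $p_{21}=-1$: the walks $1\to2\to3\to1$ and $1\to3\to2\to1$ both carry the delay $c_1+c_2+c_3$, and their contributions $p_{12}p_{23}p_{31}+p_{13}p_{32}p_{21}=0$ cancel. So if you run your argument on the cycle $(1,2,3)$, the pulse does \emph{not} reappear at the first-return time, and your claim that the $r$-fold winding delay ``is attained by no other walk'' is false: chords and reversed edges inside the vertex set of an arbitrary cycle produce competing walks in the same delay class. (The theorem's conclusion still holds in this example, via the $2$-cycle $1\to2\to1$, but that is not the cycle your proof chose.)

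The repair is to run the argument on a \emph{shortest} cycle of $G_P$: any chord, including a reversed edge, inside the vertex set of a shortest cycle would close up a strictly shorter cycle, so no such edge exists; the only edges among these vertices are the cycle edges, the $r$-fold winding walk is then genuinely the unique closed walk in its delay class, and your pulse argument goes through. This minimality idea is precisely the combinatorial core of the paper's proof: there, robust FTS is first shown (via the spectral criterion of Theorem \ref{tm_2} together with linear independence of exponentials, Corollary \ref{sll1}) to force \emph{all principal minors} of $P$ to vanish --- the principal minor being the signed sum over all cycle covers of a vertex subset, i.e.\ exactly the object that survives the cancellations you overlooked --- and then Lemma \ref{cl8} proves, by induction on a shortest cycle, that vanishing of all principal minors is equivalent to acyclicity. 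Without the shortest-cycle (equivalently, principal-minor) ingredient, your necessity proof does not close.
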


The following well-known result \cite{Sachs} yields that the combinatorial criterion
is efficiently recognizable. At the same time, it provides  an algebraic criterion
of finite time stabilizability of our problem.

\begin{prop}\label{prl0} 
	Let $G$ be a directed graph with adjacency $n\times n$-matrix $W$. Then $G$ is
	acyclic if and only if
	$W$ is nilpotent, with $W^n=0$.
\end{prop}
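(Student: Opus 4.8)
The plan is to reduce both implications to the standard combinatorial interpretation of powers of the adjacency matrix. First I would establish the well-known counting identity: for every $\ell \ge 1$ and all $j,k$, the entry $(W^\ell)_{jk}$ equals the number of directed walks $j = v_0, v_1, \dots, v_\ell = k$ of length $\ell$ in $G$, meaning $(v_{s}, v_{s+1}) \in E$ for each $s < \ell$. This follows by induction on $\ell$ from the recursion $W^\ell = W^{\ell-1}W$, splitting a walk of length $\ell$ into an initial walk of length $\ell-1$ followed by a final edge. In particular, every entry of $W^\ell$ is a nonnegative integer, and $W^\ell = 0$ if and only if $G$ has no walk of length $\ell$.

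For the forward implication, suppose $G$ is acyclic. A walk of length $n$ visits the $n+1$ vertices $v_0, \dots, v_n$ drawn from the $n$-element set $\{1,\dots,n\}$, so by the pigeonhole principle some vertex repeats, say $v_i = v_j$ with $i < j$. Then $v_i, v_{i+1}, \dots, v_j$ is a closed walk of positive length. A short auxiliary argument, selecting a closed subwalk of minimal length and noting that any repetition among its internal vertices would permit further shortening, shows that such a closed walk contains a cycle in the sense defined above, with pairwise distinct vertices. This contradicts acyclicity, so $G$ has no walk of length $n$, whence $W^n = 0$ by the counting identity; in particular $W$ is nilpotent.

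For the converse I would argue by contraposition. Suppose $G$ contains a cycle $(k_1, \dots, k_l)$. Concatenating this cycle with itself $m$ times yields a closed walk of length $ml$ from $k_1$ to $k_1$ for every $m \ge 1$, so $(W^{ml})_{k_1 k_1} \ge 1$ and hence $W^{ml} \ne 0$. Thus no power of $W$ vanishes and $W$ is not nilpotent. Consequently, if $W$ is nilpotent then $G$ must be acyclic.

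The argument is classical and I expect no serious obstacle; the only point requiring care is the auxiliary claim that a closed walk of positive length always contains a cycle with pairwise distinct vertices, which is needed to match the paper's formal definition of a cycle and which I would dispatch by the minimal-length selection just described. An alternative route for the forward direction is to invoke a topological ordering of the acyclic graph, under which $W$ becomes strictly upper triangular after a permutation of the vertex labels and therefore satisfies $W^n = 0$; I would nonetheless keep the walk-counting proof as the primary one, since it treats both directions uniformly.
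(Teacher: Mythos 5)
Your proof is correct, but it cannot be compared with a proof in the paper because the paper offers none: Proposition \ref{prl0} is stated there as a well-known fact and attributed to the reference \cite{Sachs}, so your walk-counting argument supplies a self-contained proof of something the authors simply cite. Your route is the standard one --- the identity that $(W^\ell)_{jk}$ counts directed walks of length $\ell$ from $j$ to $k$, pigeonhole to extract a closed walk from any walk of length $n$, minimal-length selection to refine a closed walk into a genuine cycle (which correctly matches the paper's definition, including length-one cycles given by self-loops), and concatenation of a cycle to defeat nilpotency. One pedantic remark on the converse: you directly exhibit $W^{ml}\ne 0$ only for multiples $ml$ of the cycle length, and should add the one-line observation that $W^p=0$ forces $W^q=0$ for all $q\ge p$, so nonvanishing along an unbounded sequence of exponents already rules out nilpotency; this is immediate and not a genuine gap. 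It is worth noting that the paper's own machinery suggests an alternative proof it never assembles: Lemma \ref{cl8} (equivalently Corollary \ref{cor1}) shows, via the cycle decomposition of permutations in the determinant expansion, that acyclicity of $G$ is equivalent to the vanishing of all principal minors of $W$, and since the coefficients of the characteristic polynomial of $W$ are signed sums of principal minors, vanishing of all of them gives $\det(\lambda I_n-W)=\lambda^n$ and hence $W^n=0$ by Cayley--Hamilton; conversely, nilpotency of a nonnegative matrix forces every principal minor to vanish, since a nonzero principal minor would produce a cycle and hence, by your concatenation argument, a nonzero power. Your proof is more elementary and treats both directions uniformly; the determinant route ties the proposition directly to the criterion $(\io\io\io)$ of Theorem \ref{prl}, which is how the paper actually uses these equivalences.
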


In the following theorem we collect a number of robust algebraic criteria.

\begin{thm}\label{prl}
	Let $P=(p_{jk})$ be an $n\times n$-matrix with real entries and 
	$W$  be the matrix with entries $w_{jk}=\sgn|p_{jk}|$. Then the following statements are equivalent:
	
	$(\io)$
	the problem \reff{eq:1}--\reff{eq:in} is robust finite time stabilizable;
	
	$(\io\io)$ the products $w_{i_1i_2}w_{i_2i_3}\cdots w_{i_{n}i_{n+1}}$  equal  zero for all tuples
	$\left(i_1,\dots,i_{n+1}\right)\in\left\{1,\dots,n\right\}^{n+1}$;
	
	$(\io\io\io)$ all principal minors of  the matrix $W$ equal zero;
	
	$(\io v)$ the matrix $W$ is nilpotent, with
	$W^n=0$.
	
\end{thm}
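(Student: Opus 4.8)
The plan is to route all four equivalences through statement $(\io v)$, using the fact that $W$ is nothing but the adjacency matrix of $G_P$: since $w_{jk}=\sgn|p_{jk}|$ equals $1$ exactly when $p_{jk}\neq0$, i.e.\ exactly when $(j,k)$ is an arrow of $G_P$, the matrix $W$ coincides with the adjacency matrix in the sense defined before the theorem. With this identification the equivalence $(\io)\Leftrightarrow(\io v)$ is immediate by concatenation of two earlier results: Theorem~\ref{tm_5} states that robust FTS is equivalent to $G_P$ being acyclic, and Proposition~\ref{prl0} states that $G_P$ is acyclic if and only if $W$ is nilpotent with $W^n=0$.

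Next I would settle $(\io\io)\Leftrightarrow(\io v)$ by unwinding the matrix power. The $(i_1,i_{n+1})$-entry of $W^n$ is $\sum w_{i_1i_2}w_{i_2i_3}\cdots w_{i_ni_{n+1}}$, the sum taken over all intermediate indices $i_2,\dots,i_n$. Because every entry $w_{jk}\in\{0,1\}$ is nonnegative, such a sum vanishes if and only if each of its summands vanishes; hence $W^n=0$ holds precisely when all the products listed in $(\io\io)$ are zero. The remaining step $(\io v)\Leftrightarrow(\io\io\io)$ I would handle in two halves. For $(\io\io\io)\Rightarrow(\io v)$ I invoke the standard expansion $\det(\la I-W)=\la^n-c_1\la^{n-1}+\dots+(-1)^nc_n$, in which $c_k$ is the sum of all $k\times k$ principal minors of $W$; if every principal minor is zero then every $c_k=0$, the characteristic polynomial reduces to $\la^n$, and Cayley--Hamilton gives $W^n=0$. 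For the converse $(\io v)\Rightarrow(\io\io\io)$ I argue graph-theoretically: a $k\times k$ principal minor of $W$ is $\det W[S]$ for the principal submatrix indexed by some $S\subseteq\{1,\dots,n\}$, and $W[S]$ is exactly the adjacency matrix of the subgraph of $G_P$ induced on $S$. Since acyclicity is inherited by induced subgraphs, $W[S]$ is again nilpotent by Proposition~\ref{prl0}, whence $\det W[S]=0$ for every nonempty $S$.

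The main obstacle is the equivalence $(\io v)\Leftrightarrow(\io\io\io)$. The forward direction hinges on the elementary but essential observation that every induced subgraph of an acyclic graph is acyclic, which is what lets Proposition~\ref{prl0} be reapplied to each submatrix $W[S]$; the backward direction relies on correctly identifying the coefficients of the characteristic polynomial with signed sums of principal minors. Once both halves are in place, the statements $(\io)$, $(\io\io)$, and $(\io\io\io)$ are each shown equivalent to the hub $(\io v)$, and the four-way equivalence follows.
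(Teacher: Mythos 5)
Your proof is correct, and it is routed differently from the paper's. The paper uses statement $(\io)$ --- equivalently, acyclicity of $G_P$ via Theorem \ref{tm_5} --- as the hub: it gets $(\io)\Leftrightarrow(\io v)$ from Theorem \ref{tm_5} plus Proposition \ref{prl0} exactly as you do, but then proves $(\io)\Leftrightarrow(\io\io)$ by identifying a nonzero product $w_{i_1i_2}\cdots w_{i_ni_{n+1}}$ with the existence of a cycle in $G_P$, and gets $(\io)\Leftrightarrow(\io\io\io)$ by citing Corollary \ref{cor1}, which rests on the combinatorial Lemma \ref{cl8} (acyclicity of $G_P$ iff all principal minors vanish, proved via the permutation--cycle expansion of the determinant and an induction on $n$). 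You instead make $(\io v)$ the hub and handle both remaining equivalences without Lemma \ref{cl8}: the equivalence $(\io\io)\Leftrightarrow(\io v)$ by nonnegativity of the terms in the expansion of $(W^n)_{i_1i_{n+1}}$, the implication $(\io\io\io)\Rightarrow(\io v)$ by the principal-minor expansion of the characteristic polynomial plus Cayley--Hamilton, and $(\io v)\Rightarrow(\io\io\io)$ by passing to induced subgraphs and reapplying Proposition \ref{prl0} --- correctly avoiding the trap of deducing individual minors from the vanishing sums $c_k$, which would fail since principal minors of a zero-one matrix can be negative. Your route is shorter and more self-contained for this particular theorem; what the paper's heavier Lemma \ref{cl8} buys is that it is stated for an arbitrary real matrix $P$ rather than the zero-one matrix $W$, which is precisely what is needed later for statement $(\io\io\io)$ of Theorem \ref{prl1} (principal minors of $P$ itself), where your Cayley--Hamilton argument would give only one direction.
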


\begin{cor}\label{cortime}
	Let $P=(p_{jk})$ be an  $n\times n$-matrix with real entries and 
	$W$  be the matrix with entries $w_{jk}=\sgn|p_{jk}|$.	
	Assume that $W$ is nilpotent and 
	let 	$k_0$ be the minimum value of $k\le n$ such that $W^k=0$.

	$(\io)$ 
	Let
	$$
	a_0=\inf\left\{|a_j(x,t)|\,:\,(x,t)\in\Pi,\,  j\le n\right\}.
	$$
	Then the optimal   stabilization time  
	admits an upper bound
	$$
	T_{opt}\le \frac{k_0}{a_0}.
	$$
	
	$(\io\io)$	 Assume that $a_j(x,t)\equiv a_j(x)$ do not depend on 
	$t$ for all $j\le n$.
	Let $I$ be the set of all tuples
	$\left(i_1,\dots,i_{k_0}\right)\in\left\{1,\dots,n\right\}^{k_0}$
	such that $w_{i_1i_2}w_{i_2i_3}\cdots w_{i_{k_0-1}i_{k_0}}\ne 0$.
	Then
	$$
	\begin{array}{rcl}
	T_{opt}&=&
	\displaystyle\max\left\{\int_0^1\frac{dx}{|a_{j}(x)|}\,:\,j\le n\right\}  \ \ \textrm{if } k_0=1, \\ [5mm]
	T_{opt}&=&T^*
	\ \ \textrm{if } k_0=2,3,\\  [2mm]
	T_{opt}&\le&T^*
	\ \  \textrm{if } k_0>3,
	\end{array}
	$$
	where
	$$
	T^*=\max\left\{\sum_{k=1}^{k_0}\int_0^1\frac{dx}{|a_{i_k}(x)|}\,:\,
	\left(i_1,\dots,i_{k_0}\right)\in I\right\}.
	$$
\end{cor}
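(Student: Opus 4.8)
The plan is to read the solution off by the method of characteristics—working with smooth data, where classical $C^1$-solutions exist, and extending to $L^2$ by the continuity in Theorem \ref{evol}—and to count how many reflections a characteristic can undergo before its accumulated $P$-weight is forced to vanish. Fix a point $(x,t)$ and a component $j_0$, and trace the $j_0$-characteristic backward in time. It meets the inflow boundary of $j_0$, where the relation $v_{out}=Pv_{in}$ rewrites the value as $\sum_k p_{j_0k}(\cdots)$ over the outflow traces of those $k$ with $p_{j_0k}\ne0$; each such step is an arrow $j_0\to k$ of $G_P$ and multiplies the running weight by $p_{j_0k}$. Iterating, $u_{j_0}(x,t)$ becomes a finite sum indexed by walks $(j_0,j_1,\dots,j_r)$ in $G_P$, the walk carrying the weight $p_{j_0j_1}\cdots p_{j_{r-1}j_r}$, an exponential damping from the $b_j$-terms, and the value of $\varphi_{j_r}$ at the point where the final (partial) segment meets $\{t=0\}$. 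Since $W^{k_0}=0$ means $G_P$ has no walk of length $k_0$ (Theorem \ref{prl}), every surviving term uses at most $k_0-1$ reflections, hence at most $k_0$ characteristic segments.

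For part $(\io)$ the hypothesis $|a_j|\ge a_0$ forces each segment to consume at most $\int_0^1 dx/a_0=1/a_0$ units of time, so a walk that reaches $\{t=0\}$ spans total time at most $k_0/a_0$. Thus for $t>k_0/a_0$ no walk can reach the initial line and $u(\cdot,t)\equiv0$, giving $T_{opt}\le k_0/a_0$.

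For part $(\io\io)$ I would first pass to the exact transit times $\tau_i=\int_0^1 dx/|a_i(x)|$, which are well defined once the speeds are $t$-independent. The term attached to a walk $(j_0,\dots,j_r)$ then survives until at most $\sum_{s=0}^r\tau_{j_s}$, the extreme case being a full first and a full last segment; maximizing over the longest admissible walks—precisely the tuples counted by the index set $I$ of length-$(k_0-1)$ walks—yields $T_{opt}\le T^*$. The case $k_0=1$ is immediate: here $P=0$, each component solves a decoupled transport equation with homogeneous inflow and is annihilated exactly at $t=\tau_j$, so $T_{opt}=\max_j\tau_j$.

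The reverse inequality is the crux. Choosing a walk $(i_1,\dots,i_{k_0})\in I$ that realizes $T^*$, evaluating at the outflow end of $i_1$, and concentrating $\varphi_{i_{k_0}}$ near the inflow end of $i_{k_0}$, the associated term is a nonzero multiple of $\varphi_{i_{k_0}}$ still alive as $t\to T^*$. It can be killed only by destructive interference with competing walks that share both endpoints and the same total transit time, and hence land on the same point of $\{t=0\}$. For $k_0=2$ a walk is a single arrow, fixed by its endpoints, and for $k_0=3$ its middle vertex is pinned by the transit time, so no competitor exists and $T_{opt}=T^*$. For $k_0>3$ interior reorderings such as $(i_1,i_2,i_3,i_4)$ versus $(i_1,i_3,i_2,i_4)$ can coincide in endpoints and total time while carrying opposite $P$-weights, which is exactly why only $T_{opt}\le T^*$ can be asserted. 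I expect this interference bookkeeping—deciding when two admissible walks are forced onto the same characteristic arrival point, and controlling the residual combination—to be the main obstacle; once the walk dictionary of the first paragraph is in place, the transport estimates and the nilpotency truncation are routine.
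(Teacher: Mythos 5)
Your proposal is, in structure, the paper's own proof. Your ``walk dictionary'' is exactly the iteration of the paper's operators $S$ and $R$ (formulas \reff{Q}, \reff{eq:SR}, and the delayed-data form \reff{Qb}); part $(\io)$ is obtained just as in the paper from $(SR)^{k_0}u\equiv 0$ together with \reff{eq:SR}; the case $k_0=1$ and the inequality $T_{opt}\le T^*$ are argued the same way; and your lower-bound plan (evaluate at the outflow end of $i_1$ and concentrate $\vphi_{i_{k_0}}$ so that the contribution of a time-maximizing walk survives as $t\to T^*$) is precisely the paper's computation of $u_\al(y_\al,T^*-\eps)=[(SR)^2Su]_\al(y_\al,T^*-\eps)$ with a suitably chosen $\vphi$.

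That said, the two places where your argument is soft are genuine gaps --- and they are exactly the gaps in the paper's proof as well. First, for $k_0=3$ you claim the middle vertex of a competing walk is ``pinned by the transit time''; equal total time pins only the number $\tau_\be$, not the vertex $\be$. In the diamond graph $1\to2\to4$, $1\to3\to4$ with all speeds equal to $1$ (so $\tau_2=\tau_3$), $b_j\equiv 0$, and $p_{12}=p_{24}=p_{13}=1$, $p_{34}=-1$, the walks $(1,2,4)$ and $(1,3,4)$ of $I$ land on the same initial point and cancel identically; a direct computation gives $T_{opt}=2<3=T^*$, so the claimed equality actually fails for this admissible choice of coefficients. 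The paper buries the same difficulty in the assertion that ``there is no triple $(\al,k,\ga)\in I$ such that $k\ne\be$'', which is false for this graph; repairing the equality requires either a no-resonance hypothesis on the $\tau_j$ or a robust reading of $T_{opt}$ (supremum over admissible $b_j$, under which perturbing $b_2$ against $b_3$ destroys the cancellation). Second, both you and the paper bound $u(\cdot,t)$ for $t\ge T^*$ using only walks indexed by $I$, i.e.\ walks with exactly $k_0$ vertices; a walk that cannot be prolonged to $k_0$ vertices is not dominated by $T^*$. For instance, with $n=3$, $p_{12}$ the only nonzero entry (so $k_0=2$, $I=\{(1,2)\}$) and $\tau_3>\tau_1+\tau_2$, component $3$ carries its initial data until time $\tau_3>T^*$, so even $T_{opt}\le T^*$ fails; the paper's step ``$u_j(x,t)$ can be computed by iterating the first line of \reff{Qb}'' breaks down for such a component. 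In short, your reconstruction is faithful to the paper's argument, including precisely the steps that would need additional hypotheses (every maximal walk of $G_P$ has $k_0$ vertices; non-resonant $\tau_j$ or robustness in $b_j$) to become airtight.
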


Theorem \ref{prl} can be recast as follows.

\begin{thm}\label{prl1}
	Let $P=(p_{jk})$ be an $n\times n$-matrix with real entries and  $P_{abs}=(|p_{jk}|)$.
	Then the following statements are equivalent:
	
	$(\io)$
	the problem \reff{eq:1}--\reff{eq:in} is robust finite time stabilizable;
	
	$(\io\io)$ the products $p_{i_1i_2}p_{i_2i_3}\cdots p_{i_{n}i_{n+1}}$ are equal to zero for all tuples
	$\left(i_1,\dots,\allowbreak i_{n+1}\right)\in\left\{1,\dots,n\right\}^{n+1}$;
	
	$(\io\io\io)$ all principal minors of  the matrix $P$ equal zero;
	
	$(\io v)$ the matrix $P_{abs}$ is nilpotent, with
	$P_{abs}^n=0$.
\end{thm}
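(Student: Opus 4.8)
The plan is to read Theorem~\ref{prl1} off Theorem~\ref{prl} by noting that three of the four conditions depend only on the pattern of vanishing entries of $P$, and to treat the principal-minor condition by a separate combinatorial argument. Since $w_{jk}=\sgn|p_{jk}|$ is zero exactly when $p_{jk}$ is, statement $(\io)$ is verbatim the same in both theorems; a product $p_{i_1i_2}\cdots p_{i_ni_{n+1}}$ of reals vanishes iff one factor does, hence iff the matching product of $w$'s vanishes, so $(\io\io)$ here is equivalent to $(\io\io)$ of Theorem~\ref{prl}; and $P_{abs}$ and $W$ share the same zero pattern and are entrywise nonnegative, so each entry of $P_{abs}^n$ is a sum of nonnegative products that vanishes exactly when the corresponding entry of $W^n$ does, giving $P_{abs}^n=0\Leftrightarrow W^n=0$ and thus the equivalence of the two versions of $(\io v)$. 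By Theorem~\ref{prl} these three conditions are mutually equivalent, and by Proposition~\ref{prl0} each of them is equivalent to acyclicity of $G_P$.

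It remains to attach $(\io\io\io)$, and for this I would show directly that vanishing of all principal minors of $P$ is equivalent to $G_P$ being acyclic. The easy direction expands a principal minor over permutations of the index set $S$: a nonzero summand would require every pair $(i,\sigma(i))$ to be an arrow of $G_P$, so the cycle decomposition of $\sigma$ would produce cycles of $G_P$ (fixed points producing self-loops). When $G_P$ is acyclic no such $\sigma$ exists, every summand is zero, and $\det P_S=0$ for all $S$.

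The real content is the converse, where the obstacle is sign cancellation: a principal minor of the real matrix $P$ might vanish even though $G_P$ carries a cycle. I would avoid this by selecting a cycle $C$ of \emph{minimal} length $g$ and taking $S$ to be its vertex set, so that $|S|=g$ and $G_P[S]$ has girth $g$. The decisive observation is that $G_P[S]$ then has no arrow beyond the $g$ edges of $C$: a self-loop would be a cycle of length one, while any genuine chord $(v_i,v_j)$, together with the arc of $C$ leading from $v_j$ back to $v_i$, would close a cycle of length between $2$ and $g-1$, in either case contradicting minimality of $g$. Hence $G_P[S]$ is exactly the single cycle $C$, so in the expansion of $\det P_S$ the only permutation $\sigma$ all of whose arrows lie in $G_P[S]$ is the cyclic permutation realizing $C$ itself; therefore $\det P_S=(-1)^{g-1}\prod_{v\in S}p_{v,\sigma(v)}\ne 0$. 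A cycle in $G_P$ thus forces a nonzero principal minor, and the contrapositive gives that vanishing of all principal minors of $P$ makes $G_P$ acyclic.

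Putting the pieces together, $(\io\io\io)$ is equivalent to acyclicity of $G_P$ and hence to $(\io)$, $(\io\io)$, and $(\io v)$, which finishes the equivalence. The only genuine difficulty is the no-chord argument that pins down the minimal principal minor; once cancellation is excluded there, Theorem~\ref{prl1} is a direct transcription of Theorem~\ref{prl}.
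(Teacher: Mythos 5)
Your proof is correct, and its skeleton matches the paper's: conditions $(\io)$, $(\io\io)$, $(\io v)$ are transferred from Theorem~\ref{prl} by matching zero patterns (the paper does this via the expansion \reff{kl91} of $(P^n)_{ij}$, you do it by comparing $P_{abs}^n$ with $W^n$ entrywise using nonnegativity -- both arguments are sound), and condition $(\io\io\io)$ is attached by proving that vanishing of all principal minors of $P$ is equivalent to acyclicity of $G_P$. Where you genuinely diverge is in the proof of this last equivalence, which in the paper is Lemma~\ref{cl8}, established earlier in the combinatorial section and merely cited in the proof of Theorem~\ref{prl1}. The paper proves the hard direction (cycle implies nonzero principal minor) by induction on $n$, splitting into two cases: if a cycle of length $l<n$ exists, pass to the induced submatrix and invoke the induction hypothesis; if the only cycle is Hamiltonian of length $n$, argue it is the unique cycle and that all other permutations contribute zero to $\det P$. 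You collapse this induction into a single step: choose a cycle $C$ of minimal length $g$, and observe that the induced subgraph on its vertex set can have no self-loop and no chord, since either would close a cycle of length strictly less than $g$; hence the induced subgraph is exactly $C$, the expansion of the corresponding principal minor has exactly one surviving permutation (the cyclic one realizing $C$), and the minor equals $(-1)^{g-1}$ times a nonzero product. This no-chord observation is a sharpening of the paper's ``unique Hamiltonian cycle'' argument in its Case 2, applied directly to the right vertex set, and it buys a shorter, non-inductive proof; the paper's inductive formulation spreads the same single-surviving-term idea across a base case and two inductive cases. Both handle the sign-cancellation obstacle identically: cancellation is impossible because only one permutation contributes at all.
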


Our results can be extended to the case of nonautonomous boundary conditions as follows.

\begin{thm}\label{ss22}
	Let $W=(w_{jk})$ be a  constant zero-one $n\times n$-matrix.
	For every $q_{jk}\in C^1(\R_+)$, where 
	$j,k\le n$,
	the problem
	\reff{eq:1}--\reff{eq:in} with $p_{jk}=q_{jk}(t)w_{jk}$
	is
	robust finite time stabilizable  if and only if   the matrix $W$ fulfills one of the conditions $(\io\io)$--$(\io v)$ of Theorem \ref{prl}
	is satisfied.
\end{thm}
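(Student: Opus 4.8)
The plan is to first observe that, by Proposition \ref{prl0} and Theorem \ref{prl}, conditions $(\io\io)$--$(\io v)$ are mutually equivalent and amount to acyclicity of the directed graph $G_W$ (since $W$ is already a zero-one matrix, $w_{jk}=\sgn|w_{jk}|$, so Theorem \ref{prl} applies verbatim with $P$ replaced by $W$). It then suffices to prove that the problem \reff{eq:1}--\reff{eq:in} with $p_{jk}=q_{jk}(t)w_{jk}$ is finite time stabilizable for every $q_{jk}\in C^1(\R_+)$ and all $a_j,b_j$ satisfying \reff{eq:h1} and \reff{ss44} if and only if $G_W$ is acyclic. I would split this into the (easy) necessity and the (main) sufficiency of acyclicity.

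For the necessity I would argue by contraposition. If $G_W$ contains a cycle, choose the constant coefficients $q_{jk}(t)\equiv 1$, so that $p_{jk}=w_{jk}$ and the boundary matrix equals $W$. Then $G_P=G_W$ has a cycle, and Theorem \ref{tm_5} shows that this problem with constant boundary matrix fails to be robust FTS, i.e. there already exist admissible $a_j,b_j$ for which it is not FTS. Hence the universal statement over all $q_{jk}$ fails, which is exactly the desired contrapositive.

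The sufficiency is the heart of the matter, and I would establish it by the method of characteristics combined with a walk-counting argument on $G_W$. Fix $q_{jk}$, $a_j$, $b_j$. Tracing a characteristic of a component $u_j$ backwards from a point $(x,t)$, it crosses the strip in time $\int_0^1 dx/|a_j|\le 1/a$ by \reff{eq:h1} and meets a boundary, where \reff{eq:3} expresses the value as $\sum_k p_{jk}(t')u_k=\sum_k q_{jk}(t')w_{jk}u_k$; each nonzero summand corresponds to an edge $(j,k)$ of $G_W$. Iterating, the value $u_j(x,t)$ is represented (for smooth data, then by density for $L^2$-data) as a finite sum over reflection walks in $G_W$, the contribution of a walk $(i_1,\dots,i_{r+1})$ carrying a factor $q_{i_1i_2}(t_1)\cdots q_{i_ri_{r+1}}(t_r)\,w_{i_1i_2}\cdots w_{i_ri_{r+1}}$ times an integrating factor and a trace of the initial datum. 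Reaching the line $t=0$ from height $t$ requires at least $\lceil at\rceil$ crossings, since each crossing lowers time by at most $1/a$. Thus for $t>(n-1)/a$ any walk that reaches $t=0$ has at least $n$ edges, and by condition $(\io\io)$ (equivalently $W^n=0$) its $w$-product vanishes; the surviving walks, of length at most $n-1$, cannot be extended without reaching length $n$, so they terminate at a vertex with empty out-neighborhood (a zero row of $W$), where the boundary value \reff{eq:3} is an empty sum equal to zero. Hence $u(\cdot,t)\equiv 0$ for $t>(n-1)/a$, giving FTS with $T_e=(n-1)/a$ uniformly in $q_{jk},a_j,b_j$ (and sharpened by Corollary \ref{cortime}), and therefore robust FTS for every $q_{jk}$.

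The main obstacle, and the only place where the time-dependence enters nontrivially, will be the rigorous justification of the reflection-walk representation within the $L^2$-generalized solution framework for time-dependent $P(t)=(q_{jk}(t)w_{jk})$: one must first extend the well-posedness of Theorem \ref{evol} to $C^1$ time-dependent boundary coefficients (which follows by the same extension-by-continuity from classical $C^1$-solutions, the regularity of $q_{jk}$ ensuring that the compatibility conditions are preserved) and then pass to the limit in the walk decomposition. The combinatorics, however, is insensitive to the values $q_{jk}(t)$: because $p_{jk}(t)$ is supported on the fixed pattern $w_{jk}$, every instantaneous dependency graph is a subgraph of $G_W$, so the vanishing of walks of length at least $n$ is inherited directly from $W^n=0$, and the stabilization time $(n-1)/a$ does not depend on the choice of $q_{jk}$.
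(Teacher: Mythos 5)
Your proposal follows the same two-pronged route as the paper's own proof: for necessity you freeze the boundary coefficients ($q_{jk}\equiv 1$, so that $P=W$ is constant) and invoke the constant-coefficient criteria -- the paper does exactly this, fixing $P(t)$ independent of $t$ and citing Theorem \ref{prl1}; for sufficiency you expand the solution along backward characteristics into reflection walks whose $w$-products vanish, which is precisely the content of the paper's Lemma \ref{equiv2}, proved there ``similarly to Lemma \ref{equiv1}'' via the identity \reff{eq:SR} for powers of the operator $SR$. Your observation that the factors $q_{jk}(t_s)$ ride along harmlessly, because vanishing is decided by the zero-one pattern $W$ alone, is also the paper's key point.

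However, the quantitative core of your sufficiency argument contains a genuine error. A backward path from height $t$ with $r$ reflections consists of $r+1$ strip crossings, each descending at most $1/a$ in time; hence a path reaching the initial line satisfies $r\ge at-1$, so $t>(n-1)/a$ only forces $r\ge n-1$, not $r\ge n$ (you conflated the number of crossings with the number of edges). Walks with exactly $n-1$ edges can perfectly well reach $t=0$ and carry a nonzero product: for an acyclic $W$ take a path through all $n$ vertices, e.g. $w_{12}w_{23}\cdots w_{n-1,n}=1$ when $W$ is strictly upper bidiagonal. Such a walk terminates because its last characteristic meets the initial line, where it picks up a trace of $\varphi$ -- not because it reaches ``a vertex with empty out-neighborhood'', so its contribution is not an empty sum and does not vanish. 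Concretely, take $n=2$, $m=1$, $w_{12}=1$ and all other entries zero, $q_{12}\equiv 1$, $a_1\equiv a$, $a_2\equiv -a$, $b_1=b_2\equiv 0$. Then $u_2(1,t)=0$ and $u_1(0,t)=u_2(0,t)$, and the method of characteristics gives $u_1(x,t)=\varphi_2(at-x)$ for $x/a<t\le(x+1)/a$, which is not identically zero for $t$ slightly above $(n-1)/a=1/a$; the optimal time here is $2/a=n/a$, in accordance with Corollary \ref{cortime}. So both your claimed stabilization time $T_e=(n-1)/a$ and the dead-end-vertex argument supporting it are false. The repair is immediate and lands you exactly on the paper's proof: for $t>n/a$ every walk reaching the initial line has $r\ge n$ edges, its product contains a factor $w_{i_1i_2}\cdots w_{i_ni_{n+1}}=0$ by condition $(\io\io)$, hence $u(\cdot,t)\equiv 0$ -- this is \reff{C0}, i.e., \reff{eq:SR} with $l=n$. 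Your closing remark on extending the well-posedness Theorem \ref{evol} to $C^1$ time-dependent boundary coefficients is a legitimate point, and is consistent with how the paper itself treats it (Subsection \ref{exk5}).
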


The paper is organized as follows. Sections \ref{sec:state}--\ref{related}  motivate our research
and describe potential applications. Extentions  to evolution
families and applications to nonlinear problems are discussed in 
Section \ref{sec:Examples}. There we also give
examples showing the non-robustness of the spectral criterion.
In  Section  \ref{sec:criteria} we
prove our main results in  Theorems  \ref{tm_2},  \ref{tm_5},    and \ref{prl}
stating, respectively,
the spectral, combinatorial, and algebraic  stabilization criteria.  
This section also contains the proofs of Corollary \ref{cortime} about the optimal
stabilization time and
Theorem \ref{ss22} addressing 
nonautonomous boundary conditions.

\section{Motivation and comments}\label{sec: mot}
\renewcommand{\theequation}{{\thesection}.\arabic{equation}}
\setcounter{equation}{0}

\subsection{Motivation  and related work}\label{sec:state}

The FTS notion is motivated by the physical question whether solutions to an asymptotically stable system reach an equilibrium point (see, e.g., \cite{bhat}).
Last years  systems with the FTS property attract more and more attention, first of all due to
applications.
In particular,  they are well suitable to design 
controllers
and, therefore, are intensively studied
in control and system engineering
\cite{corbas,Coron,gugat,lakra,udw05,per}.
Starting with the work of D. Russell \cite{russell}
in  control theory  for linear autonomous hyperbolic systems, much research is
devoted to finding   boundary
controls transferring the system from an arbitrary initial state to the zero state, see also \cite{CorNgu,eller,huoliv}.
In the present paper, instead of finding  boundary controls, we provide classes of first-order hyperbolic systems ensuring the
above property of the solution operator $\vphi\to u(\cdot,T)$, namely the property that $u(\cdot,T)\equiv 0$ for all
initial functions $\vphi$. 
The distinct feature of our systems
is that the evolution processes they describe are irreversible in time.
In the literature much attention is also paid to finding or estimating the optimal stabilization time $T_{opt}$, being of special interest for engineers.
In \cite{CorNguyen}, the number $T_{opt}$  is explicitly computed for autonomous one-side control systems, where $a_j(x,t)\equiv a_j(x)$ and $b_j(x,t)\equiv b_j(x)$.
This is also one 
of the themes in our paper, cf. Corollary \ref{cortime}.

The concept of FTS plays an important role also in the research 
on the  adaptive output-feedback stabilization
\cite{hegez,BerKrs} and  inverse problems
\cite{tikh18}.

Another motivating area is photoacoustic imaging
\cite{CoxKara,StepUhl}. Even basic photoacoustic tomography models demonstrate
mathematical properties which are
crucial for  reconstruction of photoacoustic wave fields and that
are closely related to FTS systems.

N. \"Eltysheva \cite{Elt}, who is the second author of the present paper,
identifies  a class of  autonomous linear first-order hyperbolic systems with FTS property; see also \cite{guo}. This approach is based on  spectral analysis. 

In the present paper, we give a comprehensive FTS
analysis of  initial-boundary value problems for
a class of nonautonomous  hyperbolic systems.
While  nonautonomous case is   less studied
than  the  autonomous one,
nonautonomous phenomena  occur in many physical situations 
\cite{crit,flo,hegez,man}.

We pay a special attention to the robustness issue, which
is  important in the application areas mentioned above. 
It is absolutely clear that solving PDE systems numerically  requires a certain robustness. Moreover, concrete applications  usually  involve
measurement data  known only approximately.
In the control theory, the  quality  of    control  algorithms  is    estimated,
in particular,   by   robustness  with  respect to perturbations.
The robust FTS property (or, more generally, the robust stability property \cite{corbas}) is important in designing robust finite time controllers for  dynamical systems under modeling uncertainty. 

We remark that the robustness concept for quasilinear problems is a more
complicated issue (see, e.g., \cite{CorNguyen}), and this is a topic of our ongoing work. Another interesting direction is 
investigation of robustness not only with respect to  coefficients of the
hyperbolic system but also wih respect to  boundary coefficients.

\subsection{Related stability concepts}\label{related}
Stability properties of a dynamical system are crucial 
for adequate description of physical phenomena.
The FTS is an important  instance of  more general  concept of asymptotic stability. The last concept is
suggested
by Lyapunov in 1892 \cite{lyapunov} and is used to describe  the behavior of systems
within  infinite time intervals.  
The FTS concept is used to deal with the
systems operating over  finite  time intervals.
More precisely,  the FTS systems form a
subclass of asymptotically stable systems characterized  by  superstability property
which is studied in \cite{bal105,cr13,rw95} in the autonomous case
and  in \cite{KmLyul} in the nonautonomous case.

Consider an abstract evolution equation 
\begin{equation}\label{ss00}
\frac{d}{dt}x(t)=\B(t)x(t), \quad x(t)\in X,
\end{equation}
on  a Banach space $X$, where $\B(t): X\to X$ 
for each $t\ge 0$   is a linear operator.

\begin{defn}
	The system \reff{ss00}
	is called  {\rm exponentially stable} if there exist positive reals
	$\ga$ and $M$ such that every solution $x(t)$
	satisfies the estimate
	\beq\label{cor*}
	\|x(t)\|\le M e^{-\gamma t} \|x(0)\|, \quad t\ge 0.
	\ee
\end{defn}
The exponential stability for  hyperbolic systems  
with linear and nonlinear boundary conditions
has been intensively investigated in the literature by different methods,
in particular, by  the characteristic method \cite{Green,KmLyul,Slemrod}, by the Lyapunov’s function
approach \cite{Coron,cor1,diagne},  by the delay-equation approach 
\cite{chitour,CorNguyen},
and the backstepping approach \cite{CorNgu,coron2021_1,CorVas,2017-02-27}.

A stronger stability property is stated in the next definition.
\begin{defn}
	The system \reff{ss00}
	is called {\rm superstable} if the estimate \reff{cor*}
	holds for every $\gamma> 0$ and some $M=M(\gamma)$.
\end{defn}

Roughly speaking,  all solutions to  superstable  systems decay faster than any exponential as $t\to \infty$.  

It turns out that the concepts of superstability and FTS are the same in the autonomous
case we consider. Note that this is not true, in general (see  \cite{cr13}). The following fact is proved in Section \ref{sec:last}.

\begin{thm}\label{s1}
	System \reff{ns0} is superstable if and only if it is FTS.
\end{thm}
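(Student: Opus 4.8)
The plan is to translate both notions into properties of the $C_0$-semigroup $\{S(t)\}_{t\ge 0}$ that the autonomous problem \reff{ns0} generates on $L^2(0,1)^n$, so that $u(\cdot,t)=S(t)\vphi$ is the $L^2$-generalized solution with datum $\vphi$ and its infinitesimal generator is $\A$. In this language FTS means precisely that $S(t)=0$ for all $t>T_e$ (a \emph{nilpotent} semigroup), while superstability means that the growth bound $\om_0=\inf_{t>0}\frac1t\ln\normed{S(t)}$ equals $-\infty$. I would first record these two reformulations: the first is a direct restatement of Definition \ref{defn:stab} applied to every $\vphi$, and the second is the standard fact that the validity of $\normed{S(t)}\le M(\ga)e^{-\ga t}$ for all $t\ge 0$ and every $\ga>0$ is equivalent to $\om_0\le -\ga$ for every $\ga$, i.e. $\om_0=-\infty$. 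A preliminary point to verify is that $\{S(t)\}$ is indeed a well-defined $C_0$-semigroup with generator $\A$, which follows from the autonomy of the problem together with the well-posedness in Theorem \ref{evol}.

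For the implication FTS $\Rightarrow$ superstable I would argue directly. Since $\{S(t)\}$ is a $C_0$-semigroup, $C:=\sup_{0\le t\le T_e}\normed{S(t)}<\infty$. Fix $\ga>0$ and set $M=Ce^{\ga T_e}$. For $0\le t\le T_e$ one has $\normed{S(t)}\le C\le Ce^{\ga(T_e-t)}=Me^{-\ga t}$, while for $t>T_e$ nilpotency gives $\normed{S(t)}=0\le Me^{-\ga t}$. Hence the superstability estimate \reff{cor*} holds for every $\ga>0$, as required.

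The converse, superstable $\Rightarrow$ FTS, is the substantial direction, and it is where the hyperbolic structure must enter, because in general a superstable semigroup need not be nilpotent. Here I would route the argument through the spectral criterion of Theorem \ref{tm_2}. Superstability gives $\om_0=-\infty$, and the standard inequality $s(\A)\le\om_0$ between the spectral bound $s(\A)=\sup\{\Re\la:\la\in\spec(\A)\}$ and the growth bound then forces $s(\A)=-\infty$. Since the supremum over a nonempty set of real numbers can never equal $-\infty$, this means $\spec(\A)=\emptyset$, and Theorem \ref{tm_2} now yields that the autonomous problem is FTS.

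The main obstacle is exactly this converse: the only general semigroup fact at hand is $s(\A)\le\om_0$, which alone cannot upgrade a $-\infty$ growth bound to genuine nilpotency. The argument is salvaged solely by Theorem \ref{tm_2}, which encodes the special finite-speed-of-propagation structure of the decoupled first-order system; it is this characterization of FTS through emptiness of $\spec(\A)$, rather than any abstract semigroup theory, that closes the loop.
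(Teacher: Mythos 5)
Your proposal is correct and follows essentially the same route as the paper: the substantial direction (superstable $\Rightarrow$ FTS) is obtained by showing the spectrum of $\A$ is empty and invoking Theorem \ref{tm_2}, where your appeal to the inequality $s(\A)\le\om_0$ is just the abstract packaging of the paper's argument that the resolvent $R(\la;\A)x=\int_0^\infty e^{-\la t}T(t)x\,dt$ extends to an entire function under superstability. Your explicit verification of the easy direction (FTS $\Rightarrow$ superstable), which the paper leaves implicit, is a welcome completion rather than a deviation.
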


We conclude this subsection with a remark about finite-dimensional spaces.
Let $\B(t)=\B: X\to X$  be an autonomous  linear operator on a Banach space $X$.
Then the FTS  property makes sense  only if $X$ is infinite dimensional, since otherwise the operator
$\B$ has a non-empty point spectrum. 
In the nonlinear autonomous finite dimensional case, the FTS 
is investigated in the recent paper \cite{Lopez}, see also references therein.

\subsection{Further comments} \label{sec:Examples}
\subsubsection{About extensions
	to evolution families}
\label{exk5} 
We formulate our results  for the  initial-boundary value 
problem \reff{eq:1}--\reff{eq:in} where the initial time, say $\tau$, is fixed (to be zero). The established FTS criteria  do not depend on the initial function $\vphi$. From the dynamical point of view, it is interesting
to know  whether these criteria do not also depend  on the initial time  $\tau$.
The question can naturally be answered  in terms of the evolution
families
generated by the 
problem \reff{eq:1}, \reff{eq:3} with the initial conditions
\begin{eqnarray}
u_j(x,\tau) = \varphi_j(x), \quad 0\le x\le 1,\,j\le n,
\label{eq:int}
\end{eqnarray}
where $\tau\ge 0$ is arbitrary fixed.
Existence of the evolution family on the space
$L^2(0,1)^n$ in the case of autonomous boundary conditions 
is proved in 
\cite[Theorem~2.3]{KmLyul}. Note that the last result can be  extended 
to nonautonomous boundary conditions with bounded coefficients.

Recall that (see \cite[Definition 5.3]{Paz}) a two-parameter family $\{U(t,\tau)\}_{t\ge\tau}$ 
of linear bounded operators on a Banach space $X$
is called an  {\it evolution family}
if, first, 
$U(\tau,\tau) =I$ and $U(t,s)U(s,\tau) =  U(t,\tau)$ for all
$t \ge s \ge\tau$ and, second,
the map $(t,\tau)\in\R^2\rightarrow U(t,\tau)\in\LL(X)$ is strongly continuous for all
$t\ge \tau$.  	
We say that an evolution family  $\{U(t,\tau)\}_{t\ge\tau\ge 0}$ is
generated by the problem  \reff{eq:1}, \reff{eq:3}, \reff{eq:int} 	on the space $X=L^2(0,1)^n$ if, for each initial function $\vphi\in X$, the function 
$U(t,\tau)\vphi$ is the $L^2$-generalized solution at time $t$.

Let us introduce a 
dynamical notion of FTS generalizing  \cite[Definition 1.1]{Coron2021}.	
\begin{defn}\label{U}	\rm
	An evolution family 	$\{U(t,\tau)\}_{t\ge\tau\ge 0}$ 
	on a Banach space $X$ is called
	FTS if
	there exist  positive reals $T$ and  $K$ such that $U(\tau+T,\tau)=0$
	for all $\tau\ge 0$  and
	$\|U(t,\tau)\|_{\LL(X)}\le K$ for all $t\ge\tau\ge 0$ with $t-\tau\le 1$.
	The infimum value of all  $T$ with the
	above property is called the optimal stabilization time.
\end{defn}

Compared to Definition \ref{defn:stab}, the dynamical notion of FTS
additionally	requires uniform stabilization with respect to 
$\tau$  and  uniform boundedness of the evolution family.

Note that the criteria given by Theorems \ref{tm_2},  \ref{tm_5},    and \ref{prl} do not depend on the initial time. Therefore, the uniform
stabilization property in Definition~\ref{U} is satisfied automatically
for
the evolution family $U(t,\tau)$ generated by the problem  \reff{eq:1}, \reff{eq:3}, \reff{eq:int}
on the space $L^2(0,1)^n$. The uniform boundedness property is true
whenever  the coefficients $a_j$ and $b_j$ as well as the  nonautonomous 
(if any) boundary coefficients $p_{jk}$ are bounded. This  easily follows from the 
uniform
stabilization  property and the
following exponential bound
obtained in  \cite[Section~4.2]{KmLyul}:
$$
\|U(t,\tau)\|_{\LL(L^2(0,1)^n)}\le K_0e^{C_0(t-\tau)}\quad\mbox{for all } t\ge\tau,
$$
where constants $K_0\ge 1$ and $C_0>0$ do not depend
on $\tau$ and $t$. Consequently, if all the coefficients in \reff{eq:1}
and  \reff{eq:3} are bounded, then the   criteria for FTS
in the sense of Definition \ref{defn:stab}
remain to be  true  in the sense of Definition \ref{U}.

On the other hand, if the coefficients are not bounded, then the 
evolution family $\{U(t,\tau)\}_{t\ge\tau\ge 0}$ is not necessarily FTS. 
This  follows from the following simple example (a similar example
for one hyperbolic equation with an integral boundary condition is
constructed in
\cite[Remark 1.2]{Coron2021}):
$$
\begin{array}{ll}
\partial_t u_1- \partial_x u_1 = 0,\quad  \partial_t u_2+\partial_x u_2 = 0,\\
u_1(1,t) =0,\quad u_2(0,t) =p(t)u_1(0,t),\\
u_1(x,\tau) =\vphi_1(x),\quad	u_2(x,\tau) =\vphi_2(x),
\end{array}
$$
where $p(t)$ is a smooth   function such that $p(t)=0$ if $t\in[2k,2k+1]$ and
$p(t)=t$ if $t\in[2k+\frac{5}{4},2k+\frac{7}{4}]$
for all $k\in\N$. 
Using the method of characteristics,
one can  easily show that the optimal stabilization time equals two.
Moreover, 	for $\vphi=(\vphi_1,\vphi_2)$ $=(1,1)$ we have
$$
\begin{array}{rcl}
\left[U(2k+3/2, 2k+1)\vphi\right]_1&=&
\left\{
\begin{array}{ll}
1 & \mbox{if }\,0\le x \le 1/2\\
0 & \mbox{if } \,1/2< x \le 1,
\end{array}
\right.
\\ [4mm]
\left[U(2k+3/2, 2k+1)\vphi\right]_2&=&
\left\{
\begin{array}{ll}
p(2k+3/2-x) & \mbox{if }\, 0\le x \le 1/2\\
1 & \mbox{if } \,1/2< x \le 1.
\end{array}
\right.
\end{array}
$$
Hence,	
$$
\begin{array}{cc}
\displaystyle	 \left\|U\left(2k+\frac{3}{2}, 2k+1\right)\right\|_{\LL(L^2(0,1)^2)}\ge \frac{1}{\left\|\vphi\right\|_{L^2(0,1)^2}} \left\|U\left(2k+\frac{3}{2}, 2k+1\right)\vphi\right\|_{L^2(0,1)^2}\\ \displaystyle
\ge \frac{1}{\sqrt{2}}\left(\int_0^{1/2} \left|p\left(2k+\frac{3}{2}-x\right)\right|^2\,dx\right)^{1/2}
\\ \displaystyle\ge
\frac{1}{\sqrt{2}}\left(\int_0^{1/4} \left|p\left(2k+\frac{3}{2}-x\right)\right|^2\,dx\right)^{1/2}
\ge\frac{1}{2\sqrt{2}}\left(2k+\frac{5}{4}\right).
\end{array}
$$
It follows that, for any given $K>0$, there exist $\tau\ge 0$ and $t\ge \tau$
such that  $\|U(t,\tau)\|_{\LL(L^2(0,1)^2)}\ge K$, 
as claimed.

\subsubsection{ About the nonrobustness of the spectral criterion}
\label{exk1}
Consider  the $2\times 2$-system
\beq\label{nn66}
\d_tu_1+\d_xu_1=0,\quad \d_tu_2-\d_xu_2=0,
\ee
subjected to the boundary conditions
\beq\label{lii2}
u_1(0,t)=u_1(1,t)-u_2(0,t),\quad u_2(1,t)=u_1(1,t)-u_2(0,t)
\ee
and the initial conditions
\beq\label{eq:in1}
u_1(x,0)=\varphi_1(x),\quad u_2(x,0)=\varphi_2(x).
\ee
Since
$$
\det(I_2 - diag (e^{-\lambda},e^{-\lambda })P)=1-e^{-2\lambda}+e^{-2\lambda},
$$
the characteristic equation (see \reff{nn4} below) for  the operator  $\A$ generated by \reff {nn66}--\reff{lii2}
and defined by \reff{A}--\reff{AAA}
reads
$
1=0.
$
This means that $\A$  has empty spectrum and, by Theorem \ref{tm_2}, the problem
(\ref{nn66}), \reff{lii2}, \reff{eq:in1}  is FTS.

Next, consider the perturbed problem, with $\varepsilon$-perturbations in the leading part of the
differential  system. Specifically, we consider the system
\beq\label{nn77}
\d_tu_1+(1+\varepsilon)\d_xu_1=0,\quad \d_tu_2-\d_xu_2=0,
\ee
endowed with the boundary conditions \reff{lii2} and the initial conditions \reff{eq:in1}.
In this case
$$
\det(I_2 - diag (e^{\frac{-\lambda}{1+\varepsilon}},e^{-\lambda })P)=1+e^{-\lambda}-e^{\frac{-\lambda}{1+\varepsilon}},
$$
and the characteristic equation is
$$
1+e^{-\lambda}-e^{\frac{-\lambda}{1+\varepsilon}}=0.
$$
Consequently, the operator  $\A$ generated by the perturbed problem has infinitely many eigenvalues and, therefore,
the problem (\ref{nn77}), \reff{lii2},  \reff{eq:in1} is not FTS.

Finally, consider $\varepsilon$-perturbations in the lower-order part of the hyperbolic system, namely
\beq\label{nn771}
\d_tu_1+\d_xu_1+\varepsilon u_1=0,\qquad \d_tu_2-\d_xu_2=0,
\ee
endowed  with the conditions \reff{lii2} and  \reff{eq:in1}.
The characteristic equation  here reads
$$
1+e^{-\lambda}-e^{-(\lambda+\varepsilon)}=0.
$$
Again, the operator  $\A$ generated by this  problem has infinitely many eigenvalues and, therefore,
the problem (\ref{nn771}), \reff{lii2},  \reff{eq:in1}  is not FTS (even under small perturbations of 
$a_j$ or~$b_j$).

Remark that the matrix $P$  in the boundary conditions \reff{lii2}
does not satisfy the conditions of  the robust FTS Theorem  \ref{prl}. Indeed,
$$
P=\left(
\begin{array}{cc}
1&-1\\
1&-1
\end{array}
\right),\quad W=\left(
\begin{array}{cc}
1&1\\
1&1
\end{array}
\right),
$$
and, hence,  $W^2 \neq 0$.

\subsubsection{About applications to nonlinear problems}
\label{exk4} 
In \cite[Examples 3.4--3.6]{KmLyul}  we discuss problems from chemical kinetics
and boundary control theory modeled by nonlinear hyperbolic initial boundary value problems.
Their
linearizations at stationary solutions include boundary conditions fulfilling the assumptions of Theorem \ref{prl}. This makes the linearized problems to be perturbations of FTS problems, to which
one can  apply \cite[Theorem 2.7]{KmLyul}  about stability properties of the perturbed  problems.
This, in its turn, allows one to prove the asymptotic stability of stationary solutions
to the original nonlinear problems.

\section{Stabilization criteria}\label{sec:criteria}
\renewcommand{\theequation}{{\thesection}.\arabic{equation}}
\setcounter{equation}{0}

\subsection{Spectral criterion: Proof of Theorem \ref{tm_2}}
\label{sec: spectr}

Here we consider the autonomous version of our
problem  in the abstract form \reff{ns0}.

\subsubsection{Preliminary statements}\label{sec:prelim}
\vskip2mm

By means of
the (non-degenerate) change of variables $u_j\mapsto w_j$ where 
$$
w_j(x,t)=u_j(x,t)\exp\int_0^x\frac{b_j(\xi)}{a_j(\xi)}\,d\xi, \quad j\le n,
$$
we rewrite the problem \reff{ns0}
in the following equivalent form:
\beq\label{kk2}
\partial_t w  + A(x)\partial_x w = 0, \quad (x,t)\in \Pi,
\ee
\beq\label{kk3}
w_{out}(t)=P_1 w_{in}(t), \quad t\ge 0,
\ee
where $w= (w_1,\dots,w_n)$ and
\beq\label{nn2}\displaystyle
\begin{array}{ll}
	P_1=\diag(\underbrace{1,...,1}_m,e^{\beta_{m+1}},...,e^{\beta_n})P
	\diag(e^{-\beta_1},...,e^{-\beta_m},\underbrace{1,...,1}_{n-m}), \\
	\displaystyle  \beta_j=\int_0^1\frac{b_{j}(\xi)}{a_{j}(\xi)}\,d\xi.
\end{array}\ee
Note  that the problem  \reff{ns0} is FTS if and only if the problem (\ref{kk2}), (\ref{kk3})
with the initial conditions
$$
w_j(x,0)=\vphi_j(x)\exp\int_0^x\frac{b_j(\xi)}{a_j(\xi)}\,d\xi,\quad x\in[0,1],\quad j\le n,
$$
is FTS.

Switching now to an abstract setting, we proceed with the problem
\begin{equation}\label{ss8}
\begin{array}{rcl}\displaystyle
\frac{d}{dt}u(t)=\A_0u(t),  \quad
u(0)=\varphi\in L^2(0,1)^n,
\end{array}
\end{equation}
where the operator $\A_0 :  D(\A_0)\subset L^2(0,1)^n\mapsto L^2(0,1)^n$ is defined by
\beq\label{A0}
\left(\A_0v\right)(x)=-A(x)\frac{d v}{d x}
\ee
and 
\beq\label{sc10}
D(\A_0)=\{v\in L^2(0,1)^n\,:\,\d_xv\in L^2(0,1)^n,\,v_{out}=P_0v_{in}\},
\ee
the matrix  $P_0$ being an arbitrary $n\times n$-matrix with constant entries. Note
that  $\sigma(\A)=\sigma(\A_0)$
where operator $\A_0:  D(\A_0)\subset L^2(0,1)^n\mapsto L^2(0,1)^n$ is defined by  the right hand sides of \reff{A0}--\reff{sc10} with $P_0$ 
replaced by  $P_1$ as in (\ref{nn2}).

It is known from \cite[Theorem 6.29]{Kato} that the operator $\A_0$ is closed and
has finitely or countably many  eigenvalues, each of finite multiplicity.
Furthermore,  $\sigma(\A_0)$ has no finite limit
points.

In \cite[Lemma 4.2] {KmLyul} we proved the following apriori estimate for
the $L^2$-generalized solution  $u$  to the problem \reff{ss8}:
$$
\|u(\cdot,t)\|_{L_2(0,1)^n} \le K_0 e^{C_0t} \|\vphi\|_{L_2(0,1)^n},
\quad t\ge 0,
$$
the positive constants  $K_0$ and $C_0$ being independent of
$t$ and $\vphi$. It follows that the spectrum of
$\A_0$ lies in the semistrip $\Re \lambda\le C_0$.
Denote
\beq\label{nn3}
\al_j(x)=-\int_0^x\frac{1}{a_j(\xi)}\,d\xi\quad \mbox{and} \quad\tau_j=|\al_j(1)|\quad
\mbox{for all } j\le n.
\ee

Set
\beq\label{nn5}
\Delta(\lambda)=\det\left(I_n - \diag \left(e^{-\lambda \tau_1},....,
e^{-\lambda \tau_n}\right)P_0\right),
\ee
where $I_n$ is the unit $n\times n$-matrix.

\begin{lemma}\label{lem_1} A complex number $\lambda$
	is an eigenvalue of the operator $\A_0$ if and only if
	$\lambda$ satisfies the characteristic equation
	\beq\label{nn4}
	\Delta(\lambda)=0.
	\ee
\end{lemma}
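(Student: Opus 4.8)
The plan is to reduce the eigenvalue problem $\A_0 v=\lambda v$ to a decoupled system of scalar linear ODEs with explicit solutions, and then to read off the boundary condition in \reff{sc10} as a homogeneous linear system in the boundary values whose solvability determinant is exactly the $\Delta(\lambda)$ of \reff{nn5}. Both implications of the lemma will then follow simultaneously.

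First I would write the eigenvalue equation componentwise. By \reff{A0} it reads $-a_j(x)\,v_j'(x)=\lambda v_j(x)$ for each $j\le n$. Since every $a_j$ is continuous and bounded away from zero by \reff{eq:h1}, this is a regular scalar first-order ODE whose general solution is $v_j(x)=c_j\exp(\lambda\al_j(x))$, with a free constant $c_j=v_j(0)$ and $\al_j$ as in \reff{nn3}. Each such $v_j$ is $C^1$ on $[0,1]$, so $v$ and $\d_x v$ automatically belong to $L^2(0,1)^n$; hence the only genuine membership requirement in $D(\A_0)$ is the reflection condition $v_{out}=P_0 v_{in}$. Moreover $v\equiv 0$ if and only if all $c_j$ vanish.

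The key computation is to express $v_{out}$ and $v_{in}$ in terms of the constants $c_j$. Using $\al_j(0)=0$ together with the sign convention \reff{eq:h1} — which gives $\al_j(1)=-\tau_j$ for $j\le m$ and $\al_j(1)=\tau_j$ for $j>m$ — I would check that in every case $v_{out,j}=e^{\lambda\tau_j}\,v_{in,j}$, that is, $v_{out}=\diag(e^{\lambda\tau_1},\dots,e^{\lambda\tau_n})\,v_{in}$ uniformly in $j$ and irrespective of the partition at $m$. The one delicate point, and the place where the two-sided structure of the problem enters, is precisely this bookkeeping: for $j\le m$ the outgoing datum sits at $x=0$ and the incoming one at $x=1$, while for $j>m$ these roles are swapped, yet the exponential factor relating them comes out the same. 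I expect no obstacle beyond carefully tracking these signs.

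Substituting this relation into $v_{out}=P_0 v_{in}$ yields $\bigl(\diag(e^{\lambda\tau_j})-P_0\bigr)v_{in}=0$, and multiplying on the left by the invertible matrix $\diag(e^{-\lambda\tau_1},\dots,e^{-\lambda\tau_n})$ produces the equivalent system $\bigl(I_n-\diag(e^{-\lambda\tau_1},\dots,e^{-\lambda\tau_n})P_0\bigr)v_{in}=0$. Since $v\not\equiv 0$ is equivalent to $v_{in}\ne 0$ (the entries of $v_{in}$ differ from the $c_j$ only by the nonzero factors $e^{\pm\lambda\tau_j}$), the number $\lambda$ is an eigenvalue of $\A_0$ exactly when this homogeneous system admits a nontrivial solution, i.e.\ when its determinant $\Delta(\lambda)$ vanishes, which is \reff{nn4}. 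For the converse direction the same construction is explicit: any nonzero kernel vector $v_{in}$ determines the constants $c_j$ and hence an eigenfunction $v_j(x)=c_j e^{\lambda\al_j(x)}$, completing the equivalence.
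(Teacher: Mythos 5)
Your proof is correct and follows essentially the same route as the paper: integrate the decoupled ODEs explicitly as $v_j(x)=c_je^{\lambda\al_j(x)}$ and reduce the reflection condition $v_{out}=P_0v_{in}$ to a homogeneous linear system whose vanishing determinant characterizes the eigenvalues. The only difference is cosmetic bookkeeping: by taking $v_{in}$ rather than the constants $c$ as the unknown, your identity $v_{out}=\diag\left(e^{\lambda\tau_1},\dots,e^{\lambda\tau_n}\right)v_{in}$ produces the matrix $I_n-\diag\left(e^{-\lambda\tau_1},\dots,e^{-\lambda\tau_n}\right)P_0$ directly, whereas the paper forms a matrix $X(\lambda)$ acting on $c$ and then factors out invertible diagonal matrices to identify $\det X(\lambda)$ with $\Delta(\lambda)$ up to a nonvanishing exponential factor.
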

\begin{proof}
	By definition, $\lambda \in \sigma(\A_0)$ if and only if
	there exists a nonzero function
	$y(x,\la)=(y_1(x,\la),\dots,y_n(x,\la))$ in $D(\A_0)$ fulfilling the equation $\lambda y= \A_0 y$ or, the same, the equation
	\beq\label{nn611}
	\lambda y= -A(x)\frac{d y}{d x}.
	\ee
	The general solution to \reff{nn611} is given by the formula
	\beq\label{nn612}
	y_j(x,\lambda)=c_j e^{\lambda\al_j(x)}, \quad j\le n,
	\ee
	where $c_j$  are arbitrary reals. To determine $c=(c_1,\dots,c_n)^T$,
	we use the boundary conditions $y_{out}=P_0 y_{in}$, which gives us
	the equation
	\beq\label{nn7}
	X(\lambda)c=0,
	\ee
	where
	\beq\label{nn8}
	\begin{array}{ll}
		X(\lambda)=\diag\biggl(\underbrace{1,\dots,1}_m,e^{\lambda\al_{m+1}(1)},\dots,e^{\lambda\al_{n}(1)}\biggr)\\
		\quad
		-P_0
		\diag\biggl(e^{\lambda\al_1(1)},\dots,e^{\lambda\al_{m}(1)},\underbrace{1,\dots,1}_{n-m}\biggr)\\
		\quad=\diag\biggl(e^{-\lambda\al_1(1)},\dots,e^{-\lambda\al_{m}(1)},e^{\lambda\al_{m+1}(1)},\dots,e^{\lambda\al_{n}(1)}\biggr)\\ 	\quad\times
		\biggl(I- diag \left(e^{-\lambda \tau_1},\dots,
		e^{-\lambda \tau_n}\right)
		P_0\biggr)\diag\biggl(e^{\lambda\al_1(1)},\dots,e^{\lambda\al_{m}(1)},\underbrace{1,\dots,1}_{n-m}  \biggr).
	\end{array} 
	\ee
	It follows that
	\beq\label{nn881}
	\det (X(\lambda))=\Delta(\lambda)\exp\Biggl\{\lambda\sum_{j=m+1}^n\al_j(1)\Biggr\}.
	\ee
	Therefore, the equation (\ref{nn7})
	has a nonzero solution if and only if  $\lambda$
	is a solution to (\ref{nn4}), as desired.
\end{proof}

\begin{rem}\label{rem_1}\rm The function $\Delta(\lambda)$ given by
	(\ref{nn5}) is a Dirichlet polynomial. Specifically,
	$$
	\Delta(\lambda)=1+\sum_{k=1}^M E_k e^{-\lambda r_k},
	$$
	where $M\ge 1$ is an integer, $r_1<r_2<...<r_M$ are  reals expressed in terms of
	$\tau_k$, and $E_k$ are reals expressed in terms of
	the entries of the matrix $P_0$ as well as the coefficients of the hyperbolic system.
	
	It is known from \cite[p. 266--268]{levin} that if at least one of the coefficients $E_k$   is nonzero, then  $\Delta(\lambda)$ has a countable number of zeros, lying in a strip 
	which is parallel to the imaginary axis. Consequently, the equation $\Delta(\lambda)=0$
	has no solutions in the complex plane if and only if $\Delta(\lambda)\equiv 1$
	or, the same, if and only if 
	$
	E_k=0
	$ 
	for all  $k\le M$.

\end{rem}

We conclude this subsection with a technical lemma.

\begin{lemma}\label{lem_2}
	Let $r\in C^1([0,1])$ 
	and let $q(x,\xi)$ be a continuously differentiable function in~$x\in [0,1]$
	and two times continuously differentiable in  $\xi\in [0,1]$.
	Suppose that $\partial_\xi q(x,\xi)\neq 0$ and there exists
	$q_0\ge 0$ such that
	\beq\label{nn51}
	q(x,\xi)\ge-q_0
	\ee
	for all $x, \xi\in [0,1]$. Then for all $t>q_0$ and $\gamma>0$ it holds
	\beq\label{nn52}
	\frac{1}{2\pi i}\int_{\gamma-i\infty}^{\gamma+i\infty} e^{\lambda t} \int_0^1 e^{\lambda q(x,\xi)}r(\xi)\,d\xi d\lambda\equiv 0.
	\ee
\end{lemma}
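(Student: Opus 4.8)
The plan is to prove the identity \reff{nn52} by contour deformation together with a decay estimate for the inner $\xi$-integral. The left-hand side is a vertical-line Fourier--Laplace-type integral of the analytic function
$$
F(\lambda)=\int_0^1 e^{\lambda q(x,\xi)}r(\xi)\,d\xi.
$$
Because $q(x,\xi)\ge-q_0$ for all $\xi$, on the vertical line $\Re\lambda=\gamma>0$ the integrand is bounded by $e^{\lambda t}e^{-\gamma q_0}\cdot\mathrm{const}$, and more importantly the combination $e^{\lambda t}e^{\lambda q(x,\xi)}=e^{\lambda(t+q(x,\xi))}$ has exponent $t+q(x,\xi)\ge t-q_0>0$ by hypothesis. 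This strict positivity is what will let me push the contour $\Re\lambda=\gamma$ to the right, toward $\Re\lambda\to+\infty$, where the factor $e^{\lambda(t+q)}$ does \emph{not} blow up only if I first extract enough decay in $|\Im\lambda|$ from $F(\lambda)$.

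\emph{First} I would fix $x$ and integrate by parts in $\xi$ in the inner integral, using the hypothesis $\partial_\xi q(x,\xi)\neq0$. Writing $e^{\lambda q}=\frac{1}{\lambda\,\partial_\xi q}\,\partial_\xi\bigl(e^{\lambda q}\bigr)$, one integration by parts gives a factor $1/\lambda$ plus boundary terms at $\xi=0,1$, and the two-fold $C^2$-regularity in $\xi$ (together with $r\in C^1$) lets me integrate by parts a second time to gain a factor $1/\lambda^2$ on the surviving volume term. The outcome is a representation
$$
F(\lambda)=\frac{B_0(x,\lambda)}{\lambda}+\frac{G(x,\lambda)}{\lambda^2},
$$
where $B_0$ collects the endpoint contributions $\bigl[e^{\lambda q}r/\partial_\xi q\bigr]$ at $\xi=0,1$ and $G$ is a bounded integral; crucially every exponential appearing carries an argument $q(x,0)$, $q(x,1)$, or $q(x,\xi)$, each $\ge-q_0$. \emph{Then} I substitute this back: each resulting term is of the form $e^{\lambda s(x)}/\lambda$ or $e^{\lambda s(x)}/\lambda^2$ (times something bounded) with $s(x)\ge-q_0$, so that after multiplying by $e^{\lambda t}$ the exponent is $t+s(x)>0$.

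\emph{Next} I analyze each such term on the vertical contour. For the $1/\lambda^2$ terms the integral $\int_{\gamma-i\infty}^{\gamma+i\infty}e^{\lambda(t+s)}\lambda^{-2}\,d\lambda$ converges absolutely, and since $t+s>0$ I can close the contour to the right in the half-plane $\Re\lambda>\gamma$, where $e^{\lambda(t+s)}$ decays; as the integrand is analytic there (no poles once we are off $\lambda=0$, which lies to the left of the contour), the integral vanishes by Cauchy's theorem and Jordan-type estimates. The endpoint $1/\lambda$ terms are handled the same way: $\frac{1}{2\pi i}\int_{\gamma-i\infty}^{\gamma+i\infty}e^{\lambda c}\lambda^{-1}\,d\lambda$ is the standard inverse-Laplace representation of the Heaviside function, equal to $0$ for $c<0$ and $1$ for $c>0$; here the exponent is $t+s(x)>0$, so this piece is not automatically zero and must instead be shown to cancel against the original $x$-dependence. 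This is the delicate point.

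\emph{The hard part} will be controlling the endpoint ($1/\lambda$) contributions: integration by parts in $\xi$ alone produces a genuine Heaviside jump, so vanishing of \reff{nn52} cannot come from the inner integral in isolation. I expect the resolution to hinge on the role of the outer $x$-variable, which the lemma carries along but the vertical integral does not act on — the statement is an identity in $x$, and the mechanism forcing the whole expression to $0$ should be that once both integrations by parts are performed, \emph{every} surviving exponent $t+q(x,0)$, $t+q(x,1)$, $t+q(x,\xi)$ is strictly positive for $t>q_0$, so that closing to the right annihilates \emph{all} terms simultaneously, endpoint terms included; the Heaviside value $1$ never occurs because there is no competing pole to the right and the integrand decays. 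Thus the main technical obstacle is the bookkeeping: verifying that the integration-by-parts boundary terms remain uniformly bounded in $x$ and that the decay in $|\Im\lambda|$ gained from the two factors of $\lambda^{-1}$ is enough to justify absolute convergence, Fubini interchange of the $\lambda$- and $\xi$-integrations, and the contour shift to $+\infty$.
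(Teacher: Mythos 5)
There is a genuine error at the heart of your argument: the direction in which you close the contour. You claim that because every surviving exponent satisfies $t+s(x)>0$, you can close the contour to the right, "where $e^{\lambda(t+s)}$ decays," and thereby annihilate all terms, Heaviside jumps included. This is backwards. For $c>0$ the function $e^{\lambda c}$ \emph{grows} as $\Re\lambda\to+\infty$; the contour can only be closed to the right when the exponent is \emph{negative}. For a positive exponent one must close to the left, and there one picks up the pole at $\lambda=0$, which is why the standard Bromwich formula gives
$$
\frac{1}{2\pi i}\int_{\gamma-i\infty}^{\gamma+i\infty}\frac{e^{\lambda c}}{\lambda}\,d\lambda=1\quad\mbox{for }c>0,
$$
not $0$. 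So under the hypothesis $t>q_0$ every Heaviside factor in your decomposition equals $1$, not $0$ (and your $1/\lambda^2$ volume terms, treated by residues at the double pole, would contribute $t+s$ rather than $0$). The conclusion you want simply cannot be reached term by term; your own "delicate point" paragraph correctly senses the obstruction but then resolves it with the invalid right-closing argument.

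The paper's proof uses precisely the fact you tried to avoid: after a \emph{single} integration by parts in $\xi$ (no second one is needed), each term carries the factor
$$
g(t,x,\xi)=\frac{1}{2\pi i}\int_{\gamma-i\infty}^{\gamma+i\infty}\frac{e^{\lambda(t+q(x,\xi))}}{\lambda}\,d\lambda,
$$
which is identically $1$ for $t>q_0$ by \reff{nn51}. Justifying the interchange of the $\lambda$- and $\xi$-integrations (uniform convergence, via the Dirichlet criterion), the whole expression collapses to
$$
\frac{r(\xi)}{\partial_\xi q(x,\xi)}\Big|_{\xi=0}^{\xi=1}-\int_0^1\partial_\xi\left(\frac{r(\xi)}{\partial_\xi q(x,\xi)}\right)d\xi,
$$
which vanishes by the fundamental theorem of calculus. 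In other words, the boundary terms and the volume term cancel \emph{jointly}, each weighted by the Heaviside value $1$; no individual term is zero. To repair your proof you would have to abandon the right-closing step and install this cancellation mechanism instead.
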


\begin{proof} Denote the left hand side of  (\ref{nn52}) by $I(x,t)$.
	Integrating by parts, we get
	\beq\label{nn53}
	I(x,t)=\frac{1}{2\pi i}\int_{\gamma-i\infty}^{\gamma+i\infty} \frac{e^{\lambda t}}{\lambda}\left(\frac{e^{\lambda q(x,\xi)}r(\xi)}{\partial_\xi q(x,\xi)} \Big|_{\xi=0}^{\xi=1}-\int_0^1 e^{\lambda q(x,\xi)}\partial_\xi\biggl(\frac{r(\xi)}{\partial_\xi q(x,\xi)}\biggr)\,d\xi\right)\, d\lambda.
	\ee
	
	It is known that for all $\gamma>0$ 
	$$
	\frac{1}{2\pi i}\int_{\gamma-i\infty}^{\gamma+i\infty} \frac{e^{\lambda (t-t_0)}}{\lambda}\,d\lambda=
	\left\{
	\begin{array}{rl}
	1 & \mbox{if}\,\, t>t_0\\
	0 & \mbox{if} \,\,t<t_0.
	\end{array}
	\right.
	$$
	Introduce a function
	$$g(t,x,\xi)= \frac{1}{2\pi i}\int_{\gamma-i\infty}^{\gamma+i\infty}  \frac{e^{\lambda (t+q(x,\xi))}}{\lambda}\, d\lambda=
	\left\{
	\begin{array}{rl}
	1 & \mbox{if}\,\,t+q(x,\xi)>0\\
	0& \mbox{if}\,\,t+q(x,\xi)<0.
	\end{array}
	\right.
	$$
	Suppose that  $t\ge q_0+\delta$ for some $\de>0$.
	By (\ref{nn51}), we have $g(t,x,\xi)\equiv 1$ for all  $x,\xi \in [0,1]$.  By the Dirichlet criterion of the uniform convergence of improper integrals,  the integral
	$$
	\frac{1}{2\pi i}\int_{\gamma-i\infty}^{\gamma+i\infty} \frac{e^{\lambda(t+ q(x,\xi))}}{\lambda}\partial_\xi\left(\frac{r(\xi)}{\partial_\xi q(x,\xi)}\right)\,d\lambda
	$$
	converges uniformly in $\xi\in [0,1]$ and, moreover, equals $\partial_\xi\left(\frac{r(\xi)}{\partial_\xi q(x,\xi)}\right)$. Changing
	the order of integration in the right hand side of \reff{nn53},  we get
	$$
	I(x,t)= \frac{r(\xi)}{\partial_\xi q(x,\xi)} \Big|_{\xi=0}^{\xi=1}-\int_0^1 \partial_\xi\left(\frac{r(\xi)}{\partial_\xi q(x,\xi)}\right)\,d\xi\equiv 0,
	$$
	as desired.
\end{proof}

\subsubsection{Proof of Theorem \ref{tm_2}}

As it follows from Section \ref{sec:prelim}, it suffices  to prove that the problem \reff{ss8} is FTS
if and only if $\sigma(\A_0)=\varnothing$.

\textit{Necessity.}  Let the problem \reff{ss8} be FTS.
If $\lambda\in \sigma(\A_0)$, then, on the account of \reff{nn612},
the function $[ u(t)](x)=u(x,t)=e^{\lambda t} y(x, \lambda)$ is a solution
to the problem \reff{ss8} with $\vphi(x)=y(x, \lambda)$, 
that is not equal to zero  for all  $t\ge 0$.
This contradicts to   the assumption that \reff{ss8}
is FTS. Therefore, $\sigma(\A_0)=\varnothing$.

\textit{Sufficiency.} Suppose that $\sigma(\A_0)=\varnothing$.
Due to Definition \ref{defn:sol}, it suffices to prove that there is $T>0$ such that, given
$\vphi \in C^\infty_0([0,1])^n$, the corresponding
continuously differentiable solution to the problem \reff{ss8}
equals zero for $t>T$.

Fix an arbitrary  $\vphi \in C^\infty_0([0,1])^n$.
Let $u(t)$ be the continuously differentiable solution to the problem
\reff{ss8} in  $\Pi$. We use the following apriori estimate (see,  e.g., \cite[Estimate (6)]{Lyu}):
$$
\|u(t)\|_{C^1([0,1])^n} \le K_1 e^{C_1t} \|\vphi\|_{C^1([0,1])^n}, \quad t\ge 0, 
$$
with constants $K_1$ and $C_1$ not depending on  $t$ and $\vphi$.
This estimate allows us to apply to~\reff{ss8} the Laplace transform in $t$, with the parameter $\lambda$. In the new unknown 
$$
\tilde{u}(x,\lambda)=\int_0^\infty e^{-\lambda t} u(x,t) \, dt, \quad t>0,\, \Re\la> C_1,
$$
the problem \reff{ss8} reads
\beq\label{nn42}
\lambda\tilde{u}-\A_0 \tilde{u}= \vphi(x).
\ee
Now we solve \reff{nn42} and show that
the solution
\beq\label{nn43}
u(x,t)=\frac{1}{2\pi i}\int_{\gamma-i\infty}^{\gamma+i\infty} e^{\lambda t} \tilde{u}(x,\lambda) \, d\lambda, \quad t>0,\quad \gamma>C_1,
\ee
to the problem \reff{ss8} becomes zero for $t>T$
for some $T\ge 0$ to be specified below.

A general solution to the equation  (\ref{nn42})
is given by the formula
\beq\label{nn44}
\tilde{u}(x,\lambda)=z(x,\lambda)+y(x,\lambda),
\ee
where for
a partial solution $z=(z_1,\dots,z_n)$ to the nonhomogeneous equation (\ref{nn42})
we have the representation
\beq\label{nn45}
z_j(x,\lambda)=\int_0^x
\exp\left\{-\lambda \int_\xi^x \frac{d\tau}{a_j(\tau)}\right\} \frac{\varphi_j(\xi)}{a_j(\xi)} \,d\xi,\quad j\le n,
\ee  while  $y(x,\lambda)$  is a general solution to  the homogeneous equation
(\ref{nn611}) and therefore is defined  by the formula (\ref{nn612}).
We  now determine $c$ in (\ref{nn612}). For that we use the fact that the function
$\tilde{u}(x,\lambda)$ fulfills the boundary conditions  $\tilde{u}_{out}=P_0\tilde{u}_{in}$ and get
\beq\label{nn46}
X(\lambda)c=b,
\ee
where $$b=P_0(z_1(1,\lambda),\dots,z_m(1,\lambda),\underbrace{0,...,0}_{n-m})^T-(\underbrace{0,\dots,0}_{m}, z_{m+1}(1,\lambda),\dots,z_n(1,\lambda))^T.
$$
Since $\sigma(\A_0)=\varnothing$, the function $X(\lambda)$ is invertible for all complex
numbers $\la$. From Lemma \ref{lem_1} and Remark \ref{rem_1} it follows that
$\Delta(\lambda)\equiv 1$. Therefore, the representation of  $\det(X(\lambda))$
given by (\ref{nn881}) reads
$$
\det(X(\lambda))=\exp\biggl\{\lambda\sum_{j=m+1}^n\al_j(1)\biggr\}.
$$
Set $X^{-1}(\lambda)= \{\kappa_{ij}(\lambda)\}_{i,j=1}^n$ and find
$\kappa_{ij}$. On the account of the representation (\ref{nn8}) for the matrix   $X(\lambda)$ we conclude that $\kappa_{ij}(\lambda)$ are entire functions
of $\lambda$ of the type 
$
\kappa_{ij}(\lambda)=\sum_{k=1}^{m_{ij}} \gamma_k e^{\lambda \mu_k},
$
where the reals $\mu_k$ 
are determined by  $\al_j(1)$, $j\le n$, while
the reals $\gamma_k$ are determined by  the elements of the matrix $P_0$.
Moreover, there exists a positive real $\mu^{*}$ such that
for all $k$ 
\beq\label{nn47}
-\mu^{*}\le \mu_k \le  \mu^{*}.
\ee
Taking into account the equation (\ref{nn46}) which reads  $c=X^{-1}(\lambda)b$,   for all $j\le n$ 
we obtain the following formula:
$$
c_j=\sum_{k=1}^m \kappa_{jk}(\lambda)\sum_{i=1}^m p^0_{ki}z_i(1,\lambda)+\sum_{k=m+1}^n \kappa_{jk}(\lambda)\left(\sum_{i=1}^m p^0_{ki}z_i(1,\lambda)-z_k(1,\lambda)\right),
$$
where $p^0_{jk}$ are the entries of the matrix $P_0$.
Therefore, for every $j\le n$ the function $y_j(x,\lambda)$
is a linear combination of entire functions in $\lambda$,
of the following type:
\beq\label{nn50}
\int_0^1 \exp\left\{\lambda\left( \alpha_j(x)-\int_\xi^1 \frac{d\tau}{a_l(\tau)}+\mu_k\right)\right\}\frac{\varphi_l(\xi)}{a_l(\xi)}  \,d\xi,
\ee
where $l\in\{1,...,n\}$ and $\mu_k$ satisfy \reff{nn47}.

Next we apply Lemma \ref{lem_2} to the functions $y(x,\lambda)$
and $z(x,\lambda)$ in (\ref{nn44}). On the account of (\ref{nn45}) and (\ref{nn50}), the function $u(x,t)$ given by (\ref{nn43})
equals zero for $t>T$, where
$$
T=\max _{ j, l\le n}\max _{\mu_k}\max _{x,\xi\in [01]}\left(\int_\xi^1\frac{d\tau}{a_l(\tau)}-\alpha_j(x)-\mu_k, \int_\xi^x\frac{d\tau}{a_j(\tau)}\right).
$$

Since $\sigma(\A_0)=\varnothing$, the problem \reff{ss8} is FTS.
Theorem \ref{tm_2} is therefore proved.

\begin{cor} \label{sll1}  \rm If the problem  
	\reff{eq:1}--\reff{eq:in}
	is robust FTS, then
	all principal minors of the matrix $P$  are  equal to zero.
\end{cor}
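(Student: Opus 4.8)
The plan is to reduce the robust FTS hypothesis to a statement about the characteristic Dirichlet polynomial $\Delta(\lambda)$ and then exploit the freedom in choosing the transport coefficients $a_j$ to separate its exponential terms.

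First I would use robustness to select a convenient member of the admissible class. Since the problem is robust FTS, it is in particular FTS for the autonomous choice $b_j\equiv 0$ and $a_j(x)\equiv a_j$ constant, with $a_j\ge a>0$ for $j\le m$ and $a_j\le -a<0$ for $j>m$. For $b_j\equiv 0$ we have $\beta_j=0$, so the matrix $P_1$ in \reff{nn2} coincides with $P$, and hence $P_0=P$ in the characteristic determinant \reff{nn5}. By Theorem \ref{tm_2} together with $\sigma(\A)=\sigma(\A_0)$, FTS is equivalent to $\sigma(\A_0)=\varnothing$, which by Lemma \ref{lem_1} and Remark \ref{rem_1} is equivalent to $\Delta(\lambda)\equiv 1$, where
$$
\Delta(\lambda)=\det\left(I_n-\diag\left(e^{-\lambda\tau_1},\dots,e^{-\lambda\tau_n}\right)P\right),\qquad \tau_j=\int_0^1\frac{d\xi}{|a_j|}>0 .
$$

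Next I would expand this determinant by the principal-minor identity $\det(I_n-M)=\sum_{S\subseteq\{1,\dots,n\}}(-1)^{|S|}\det(M_S)$, where $M_S$ denotes the principal submatrix of $M$ on the index set $S$. With $M=\diag(e^{-\lambda\tau_1},\dots,e^{-\lambda\tau_n})P$, factoring $e^{-\lambda\tau_j}$ out of each row $j\in S$ gives $\det(M_S)=e^{-\lambda\sum_{j\in S}\tau_j}\det(P_S)$, where $P_S$ is the corresponding principal submatrix of $P$. Hence
$$
\Delta(\lambda)=\sum_{S\subseteq\{1,\dots,n\}}(-1)^{|S|}\det(P_S)\,e^{-\lambda\sum_{j\in S}\tau_j},
$$
the empty set contributing the constant term $1$.

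The key step, and the main point where robustness is really used, is to choose the constants $a_j$ so that the numbers $\tau_j=1/|a_j|$ are linearly independent over $\Q$; this is possible within the admissible class \reff{eq:h1} (pick rationally independent $\tau_j\in(0,1/a)$ and set $a_j=\pm 1/\tau_j$ with the sign dictated by \reff{eq:h1}). Then the subset sums $\sum_{j\in S}\tau_j$ are pairwise distinct over all $S$, so in the expansion each exponential $e^{-\lambda r}$ with $r>0$ occurs for exactly one subset $S$. Comparing with $\Delta(\lambda)\equiv 1$ and invoking the uniqueness of the exponents in the Dirichlet-polynomial representation of Remark \ref{rem_1} forces $(-1)^{|S|}\det(P_S)=0$, that is $\det(P_S)=0$, for every nonempty $S$. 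Since every principal minor of $P$ equals some $\det(P_S)$, all principal minors of $P$ vanish, as claimed. I expect the only delicate point to be ruling out cancellation among the subset-sum exponents in the Dirichlet polynomial; the rational-independence choice, available precisely because the criterion must hold for \emph{all} admissible $a_j$, makes this term-by-term separation immediate.
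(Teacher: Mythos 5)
Your proposal is correct and follows essentially the same route as the paper's proof: reduce by robustness to the autonomous case with $b_j\equiv 0$, invoke the spectral criterion of Theorem \ref{tm_2} so that FTS forces $\Delta(\lambda)\equiv 1$, expand $\Delta(\lambda)$ as a Dirichlet polynomial whose coefficients are the principal minors of $P$ (the paper's formula \reff{nn90}), and then vary the travel times $\tau_j$ to separate the exponentials. The only cosmetic difference is that you kill all minors at once with a single rationally independent choice of the $\tau_j$, whereas the paper makes a separate choice ($\tau_{j_1}=\cdots=\tau_{j_l}=1$, the rest equal to $1/(n+1)$) for each minor; both choices are admissible and the arguments are otherwise identical.
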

\begin{proof} 
	Due to Definition \ref{def:robust}, it  suffices  to prove the desired statement for  a partial case of 
	the problem  
	\reff{eq:1}--\reff{eq:in}, namely	for  	\reff{ss8} (autonomous version of 	\reff{eq:1}--\reff{eq:in} with $b_j\equiv 0$).
	Assume that	\reff{ss8} is robust FTS and prove that
	all principal minors of the matrix $P_0$   equal zero.
	
	Let  $(P_0)_{j_1j_2\dots j_l}$ be the principal minor of the matrix $P_0$, i.e. the determinant of the restriction of $P_0$
	to the rows and columns with indices $j_1,j_2,\dots,j_l$.	By Theorem~\ref{tm_2}, the problem \reff{ss8} is FTS iff the characteristic equation  $\Delta(\lambda)=0$ has no solutions. The function   $\Delta(\lambda)$ can be expressed as follows:
	\beq\label{nn90}
	\begin{array}{cc}
		\Delta(\lambda)=\det\left(I_n - \diag (e^{-\lambda \tau_1},\dots,e^{-\lambda \tau_n})P_0\right)=
		\\
		\displaystyle 1+\sum_{l=1}^n (-1)^l\sum_{1\le j_1< j_2<\dots< j_l\le n}e^{-\lambda(\tau_{j_1}+\tau_{j_2}+\dots+\tau_{j_l})} (P_0)_{j_1j_2\dots j_l}.
	\end{array}
	\ee
	Indeed, recall the following well-known formula for the  characteristic polynomial of the matrix $P_0$:
	\beq\label{char}
	\det(\lambda I_n-P_0)=\lambda^n-\lambda^{n-1}\sum_{j=1}^n (P_0)_{j}+\sum_{k=2}^n (-1)^k \lambda^{n-k}\sum_{1\le j_1< j_2<\dots< j_k\le n} (P_0)_{j_1j_2\dots j_k}.
	\ee
	To derive \reff{nn90}, we apply the same argument as for deriving
	\reff{char} 
	and, therefore, get
	$$
	\begin{array}{ll}
	\displaystyle\Delta(\lambda)=\det(I_n - \diag (e^{-\lambda \tau_1},\dots,e^{-\lambda \tau_n})P_0)\\ [2mm]=\displaystyle
	e^{-\lambda\sum_{i=1}^n\tau_i}\det(\diag (e^{\lambda \tau_1},\dots,e^{\lambda \tau_n})I_n - P_0)\\
	=\displaystyle
	1-\sum_{j=1}^n e^{-\lambda \tau_j}(P_0)_{j}
	+\sum_{k=2}^n (-1)^k\sum_{1\le j_1< j_2<\dots< j_k\le n}e^{-\lambda(\tau_{j_1}+\tau_{j_2}+\dots+\tau_{j_k})} (P_0)_{j_1j_2\dots j_k}.
	\end{array}
	$$
	The formula \reff{nn90} now easily follows.
	
	By assumption, the problem \reff{ss8} is FTS for any matrix $A$. Hence, due to Theorem~\ref{tm_2},
	for any  fixed $P_0$  and all positive reals
	$\tau_1, \dots,\tau_n$ we have $\Delta(\lambda)\equiv 1$. On the account of (\ref{nn90}), this means that 
	\beq\label{nnn91}
	\sum_{l=1}^n (-1)^l\sum_{1\le j_1< j_2<\dots< j_l\le n}e^{-\lambda(\tau_{j_1}+\tau_{j_2}+\dots+\tau_{j_l})} (P_0)_{j_1j_2...j_l}=0\quad \mbox{for all } \lambda\in \CC.
	\ee
	
	If $b_1$ and $b_2$ are distinct reals, then the functions  $e^{-\lambda b_1}$ and  $e^{-\lambda b_2}$
	of a complex variable  $\lambda$ are linearly independent. Appropriately choosing $\tau_l$
	for each  $ l\le n$, 
	we will show that the principal minors $(P_0)_{j_1j_2\dots j_l}$ are equal to zero for all $l\le n$. Set
	$
	\tau_0=1/(n+1).
	$
	Fix arbitrary  $1\le j_1< j_2<\dots< j_l\le n$. Put
	$\tau_{j_1}=\tau_{j_2}=\dots=\tau_{j_l}=1$ and   $\tau_i=\tau_0$ for all $i\ne j_k, k\le l$.
	Then one summand in  (\ref{nnn91}) is $(P_0)_{j_1j_2\dots
		j_l}e^{-l\lambda}$,
	while all other summands include factors of the type $e^{-\la r}$ with $r\ne l$.
	It follows that $(P_0)_{j_1j_2\dots j_l}=0$, as desired.
\end{proof}

\subsubsection{Proof of Theorem  \ref{s1}}\label{sec:last}
If the problem (\ref{ns0}) is superstable, then the resolvent
$R(\lambda; \A)$ of the infinitesimal operator $\A$
generated by (\ref{ns0}) is an entire function of a complex variable
$\lambda$. This follows from the resolvent formula \cite{Paz}
$$
R(\lambda; \A)x=\int_0^\infty\,e^{-\lambda t} T(t)x\,dt,\quad x\in L^2(0,1)^n,
$$
where $T(t)$ is a $C_0$-semigroup generated by the operator $\A$ and satisfying the estimate
$\|T(t)\|\le M(\ga)e^{-\ga t}$ for any $\ga>0$.
Consequently, $\sigma(\A)=\varnothing$.

On the other side, Theorem \ref{tm_2} says that the operator  $\A$ given by (\ref{A}), (\ref{AAA}) has empty spectrum if and only if the problem  (\ref{ns0}) is finite time stabilizable.

\begin{rem}
	\rm{
		As pointed out by an anonymous reviewer, Theorems \ref{tm_2}
		and \ref{s1} (autonomous setting) in the case $b_j\equiv 0$  (this involves no loss of generality) and $a_j>0$ could also  be derived using time-delay 
		equations. Indeed, the integration of the 
		system 	along the
		characteristic directions   results in the following functional-delay system;
		see also \cite[(3.16)]{cor1}:
		\begin{equation}
		\label{cor111}
		\phi_j(t)=\sum_{k=1}^n p_{jk}\phi_k(t-\tau_k),\quad i\le n,
		\end{equation}
		where $\phi_j(t)=u_j(0,t)$ and $\phi_j(\xi)$ for $\xi\in [-\tau_j,0]$ are known functions
		of the initial conditions~\reff{eq:in}.
		By a classical result on linear time-delay systems (see, e.g., 
		\cite[Theorem 3.5]{hale} and   \cite[(3.17)]{cor1}), system \reff{cor111} is exponentially stable if and only if there exists
		$\de > 0$ such that
		\beq\label{01}
		\Delta(\lambda)=0\mbox{ implies } \Re\la\le-\de,
		\ee
		where $ \Delta (\lambda) $ is given by  \reff{nn5} with the matrix
		$P=(p_{jk})$ in place of  $ P_0$; see also Lemma 3.1. 
		From \reff{01} it follows that the system \reff{cor111} (and, therefore, the
		original autonomous problem  \reff{eq:1}--\reff{eq:in}) is superstable if and only if
		\begin{equation}
		\label{02}
		\Delta(\lambda) \neq 0 \quad \mbox{for all} \, \, \lambda \in \CC.
		\end{equation}
		Under the condition \reff{02}, the FTS property can be obtained from appropriate estimates for the exponential decay as applied in 
		\cite{Coron} and \cite{coron2020finitetime}. 
		We are thankfull to the reviewer for drawing our attention to this approach.
	}	
\end{rem}

\subsection{Sufficient condition for robust FTS}

Due to the discussion in Subsection \ref{exk1}, the  spectral criterion is  not robust in the sense
of Definition \ref{def:robust}. To prove  robust FTS criteria stated in
Theorems  \ref{tm_5}, \ref{prl},  and \ref{prl1}, we first give a  sufficient  condition for robust FTS,
namely the condition \reff{lpr} in Lemma \ref{equiv1} below.

The regularity assumptions \reff{ss44} imposed on the coefficients of the system
\reff{eq:1} allow us  to put  the problem  \reff{eq:1}--\reff{eq:in}
into a smooth setting whenever the initial function $\vphi$
is sufficiently smooth.  We use  integration along characteristic curves: For given $j\le n$,
$x \in [0,1]$, and $t \ge 0$,
the $j$-th characteristic of \reff{eq:1}
passing through the point $(x,t)\in\Pi$ is defined
as the solution $\om_j(\xi)=\om_j(\xi,x,t)$ to the initial value problem  
$$
\partial_\xi\om_j(\xi, x,t)=\frac{1}{a_j(\xi,\om_j(\xi,x,t))},\;\;
\om_j(x,x,t)=t.
$$
The characteristic curve $\tau=\om_j(\xi,x,t)$ reaches the
boundary of $\Pi$ in two points with distinct ordinates. Let $x_j(x,t)$
denote the abscissa of that point whose ordinate is smaller.
Denote 
$$
c_j(\xi,x,t)=\exp \int_x^\xi
\left(\frac{b_{j}}{a_{j}}\right)(\eta,\om_j(\eta,x,t))\,d\eta
$$
and define  a linear operator 
$R:C(\Pi)^n\to C(\R_{+})^n$ by
\begin{eqnarray*}
	\displaystyle
	\left(Ru\right)_j(t) = \sum\limits_{k=1}^mp_{jk}u_k(1,t)+\sum\limits_{k=m+1}^np_{jk}u_k(0,t),
	\quad   j\le n.
\end{eqnarray*}

It is straightforward to show that a $C^1$-map $u:{\Pi}\to \R^n$
is a classical solution to
the problem \reff{eq:1}--\reff{eq:in}  if and only if
it 
satisfies the following system of functional equations:
\begin{equation}
\begin{array}{cc}
\label{Q}
u_j(x,t)=\left\{\begin{array}{lcl}
\displaystyle c_j(x_j(x,t),x,t) \left(Ru\right)_j(\om_j(x_j(x,t)))
& \mbox{if}&  
x_j(x,t)=0 \mbox{ or } x_j(x,t)=1 
\\
c_j(x_j(x,t),x,t)\vphi_j(x_j(x,t))      & \mbox{if}& 
x_j(x,t)\in(0,1).
\end{array}\right.
\end{array}
\end{equation}
A continuous function~$u$ satisfying  \reff{Q} in $\Pi$
is called a {\it continuous solution} to \reff{eq:1}--\reff{eq:in}.

Introduce a linear operator
$S: C(\R_{+})^n\to C(\Pi)^n$
by 
$$
(Sv)_j(x,t)=c_j(x_j(x,t),x,t)v_j(\om_j(x_j(x,t),x,t)), \quad j\le n.
$$
Due to  (\ref{Q}),  for given $l\in \N$,
the continuous solution to
\reff{eq:1}--\reff{eq:in}  satisfies  the equation  
\beq\label{eq:SR}
u(x,t)=\left[(SR)^lu\right](x,t)\quad\mbox{ for all } t> l/a.
\ee
Hence, the stabilization properties of
the problem under consideration are closely related to the powers of the linear operator 
$SR: C(\Pi)^n\to C(\Pi)^n$.

\begin{lemma}\label{equiv1} Let $n\ge 2$. Assume that
	\beq\label{lpr}
	p_{i_1i_2}p_{i_2i_3}\dots p_{i_{n}i_{n+1}}=0 \quad \mbox{for all tuples }
	\left(i_1,i_2,\dots,i_{n},i_{n+1}\right)\in\left\{1,\dots,n\right\}^{n+1}.
	\ee
	Then 	the problem \reff{eq:1}--\reff{eq:in} is robust finite time stabilizable.
\end{lemma}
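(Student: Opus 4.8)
The plan is to iterate the fixed-point identity (\ref{eq:SR}) and read off the combinatorial structure of the iterates of $SR$, then invoke the hypothesis (\ref{lpr}) to annihilate the $n$-th iterate. By Definition \ref{defn:sol} it suffices to treat classical solutions arising from smooth data, so I would fix an arbitrary $\vphi\in C_0^\infty([0,1])^n$, let $u$ be the corresponding continuous ($C^1$) solution, and work with the relation $u=(SR)^l u$ valid for $t>l/a$.

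First I would establish by induction on $l$ an explicit expansion of the $j$-th component of $(SR)^l u$ as a finite sum indexed by tuples $(i_1,i_2,\dots,i_{l+1})\in\{1,\dots,n\}^{l+1}$ with $i_1=j$, in which each summand factors as a product of boundary coefficients $p_{i_1i_2}p_{i_2i_3}\cdots p_{i_li_{l+1}}$, a product of characteristic damping factors of the type $c_{i_s}(\cdots)$, and the value of $u_{i_{l+1}}$ at a boundary point obtained by tracing characteristics back $l$ times. The base case $l=1$ is immediate from the definitions of $R$ and $S$. For the inductive step, applying one further copy of $SR$ uses $(Rv)_j=\sum_k p_{jk}v_k(\text{bdry})$ to append one new edge $p_{i_li_{l+1}}$ with matching indices, and then $S$ to prepend one more characteristic trace and damping factor; crucially, the intermediate evaluation points always lie on the lateral boundary, so that $R$ can legitimately be reapplied.

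Taking $l=n$, every summand in the expansion of $[(SR)^n u]_j$ contains the factor $p_{i_1i_2}p_{i_2i_3}\cdots p_{i_ni_{n+1}}$, a product of $n$ boundary coefficients along a path of vertices in $\{1,\dots,n\}$. By the hypothesis (\ref{lpr}), every such product vanishes, whence $(SR)^n u\equiv 0$. Substituting $l=n$ into (\ref{eq:SR}) then gives $u(x,t)=0$ for all $t>n/a$. Since the bound $n/a$ depends only on the constant $a$ from (\ref{eq:h1}), and not on the particular coefficients $a_j,b_j$ nor on $\vphi$, the classical solutions for all smooth data vanish after time $n/a$; passing to the $L^2$-limit as in Definition \ref{defn:sol} shows the $L^2$-generalized solution is the zero function for $t>n/a$ as well. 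As this uniform stabilization holds for every choice of $a_j,b_j$ satisfying (\ref{eq:h1}) and (\ref{ss44}), the problem is robust FTS.

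The main obstacle is the bookkeeping in the induction: one must verify that the composition of boundary coefficients respects the path structure (the second index of each edge becomes the first index of the next) and that the back-traced evaluation points genuinely land on the boundary at each stage, so that $R$ applies. The damping factors $c_{i_s}$ and the shifted characteristic times are uniformly bounded and play no role in the vanishing, but they must nonetheless be carried along to state the expansion correctly.
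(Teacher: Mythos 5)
Your proposal is correct and takes essentially the same approach as the paper: both expand the iterates of $SR$ into sums whose terms each carry a product $p_{i_1i_2}p_{i_2i_3}\cdots p_{i_ni_{n+1}}$ of boundary coefficients along a path, invoke \reff{lpr} to annihilate $(SR)^nu$, and conclude from \reff{eq:SR} that solutions vanish for $t>n/a$, a bound depending only on $a$ and hence robust. The only difference is presentational: the paper writes out the cases $n=2$ and $n=3$ explicitly and remarks that the argument extends, whereas you organize the general case as a formal induction on the number of iterations.
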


\begin{proof} 
	Since \reff{lpr} does not depend on $a_j$ and $b_j$, then, on the account of \reff{eq:SR}, we are done if we prove that 
	\beq\label{C0}
	\left[(SR)^nu\right](x,t)\equiv 0\quad	\mbox{ for all } t> n/a,\,  u\in C\left(\Pi\right)^n.
	\ee
	
	Assume first  that   $n=2$.
	The condition   \reff{C0} in this case reads 
	\begin{eqnarray*}
		c_j(x_j,x,t) \left(RSRu\right)_j(\om_j(x_j,x,t))\equiv 0
		\quad\mbox{ for all } t>2/a,\,  u\in C\left(\Pi\right)^2, \mbox{ and }
		j\le 2,
	\end{eqnarray*}
	and  is satisfied whenever
	\begin{eqnarray}\label{*1}
	\left(RSRu\right)(t)\equiv 0
	\quad\mbox{ for all }  t>2/a  \mbox{ and } u\in C\left(\Pi\right)^2.
	\end{eqnarray}
	Here
	\beq
	\begin{array}{ll}
		\displaystyle \left(RSRu\right)_j(t)=\sum\limits_{k=1}^2p_{jk}(SRu)_k(1-x_k,t)
		\\\qquad
		\displaystyle=\sum\limits_{k=1}^2p_{jk}c_k(x_k,1-x_k,t)(Ru)_k(\om_k(x_k,1-x_k,t))
		\\\qquad
		\displaystyle=\sum\limits_{k=1}^2p_{jk}c_k(x_k,1-x_k,t)\sum\limits_{i=1}^2p_{ki}
		u_i(1-x_i,\om_k(x_k,1-x_k,t)),\label{*2}
	\end{array}
	\ee
	where we used the shorthand notation $x_j=x_j(x,t)$ for $j\le n$.
	At the same time, the condition   \reff{lpr}  for $n=2$ reads
	\beq\label{k6}
	p_{jk}p_{ki}=0\quad \mbox{ for all }   j,k,i\le 2.
	\ee
	The lemma now follows immediately from the equations \reff{*1}--\reff{k6}.
	
	The proof for $n=3$ is similar, with analogs of \reff{*1}, \reff{*2}, and \reff{k6} being
	\begin{eqnarray*}
		\left(RSRSRu\right)(t)\equiv 0
		\quad\mbox{ for all }  t>3/a  \mbox{ and } u\in C\left(\Pi\right)^3,
	\end{eqnarray*}
	\beq\label{re}
	\begin{array}{ll}
		\displaystyle\left(RSRSRu\right)_j(t)\\
		\qquad =\displaystyle\sum\limits_{k=1}^3p_{jk}c_k(x_k,1-x_k,t)
		\sum\limits_{i=1}^3p_{ki}
		c_i(x_i,1-x_i,\om_k(x_k,1-x_k,t))\\
		\quad\qquad\displaystyle \times
		\sum\limits_{s=1}^3p_{is}
		u_s(1-x_s,\om_i(x_i,1-x_i,\om_k(x_k,1-x_k,t))),
	\end{array}
	\ee
	and 
	$$
	p_{jk}p_{ki}p_{is}=0\quad \mbox{ for all }  j,k,i,s\le 3,
	$$
	respectively.
	
	It is clear that a similar argument works for any subsequent $n$.
\end{proof}

\begin{rem}\rm
	Lemma \ref{equiv1} proves the implication $(\io)\Rightarrow(\io\io)$ of Theorem \ref{prl1} 
	by showing that the condition \reff{lpr} is sufficient for the robust
	FTS property. In the {\it autonomous} setting, the proof of this lemma also suggests a possibility
	of proving the  inverse implication $(\io\io)\Rightarrow(\io)$ using the apparatus of delay equations.
	Assume, for simplicity, that $n=3$  and $b_{j}=0$ for $j\le 3$.
	According to \reff{eq:SR} and \reff{re}, the solution to the autonomous system
	restricted to $\d\Pi$ satisfies the following delay system:
	\beq\label{re0}
	\begin{array}{ll}
		\phi_j(t)
		\displaystyle=\sum\limits_{k=1}^3\sum\limits_{i=1}^3\sum\limits_{s=1}^3p_{jk}p_{ki}p_{is}
		\phi_s(t-\tau_i-\tau_k-\tau_j),\quad j\le 3,\  t>3/a,
	\end{array}
	\ee
	where $\phi_j(t)=u_j(x_j,t)$ and $\tau_j$ are defined by \reff{nn3}.
	This system could be used to derive the condition \reff{lpr} from the robust
	FTS property, completing the proof of the equivalence $(\io)\Leftrightarrow(\io\io)$ of Theorem \ref{prl1}.
	Note, however, that this way requires a certain amount of tedious
	work, mainly because the right hand side of \reff{re0}
	involves all functions $\phi_j$, $j\le 3$. We
	follow a different,
	combinatorial	approach as presented in the next subsection, which  allows us 
	to treat  the general {\it nonautonomous} setting.
\end{rem}

\subsection{Combinatorial  criterion: Proof of Theorem \ref{tm_5}} \label{sec:combin}

Recall that, given an   $n\times n$-matrix $P=(p_{jk})$, the graph $G_P$ was defined as the directed graph
with the adjacency matrix  $W=(\sgn|p_{jk}|)$.

{\it Sufficiency.}
Suppose that $G_P$ is acyclic. Consider an arbitrary sequence $j_1,\dots,\allowbreak j_{n+1}$, where $j_i\in\{1,\dots,n\}$.
It must contain two equal elements. Let $j_k=j_l$ for $k\le l$ such that the difference $l-k$ is minimum possible.
Since the subsequence $j_k,\dots,j_l$ does not form a cycle in $G_P$, there exists $s$ such that
$k\le s<l$ and $(j_s,j_{s+1})$ is not a directed edge of $G_P$, that is $p_{j_sj_{s+1}}=0$. It follows that
$ p_{j_1j_2}p_{j_2j_3}\cdots p_{j_{n}j_{n+1}}=0$ and, therefore, the condition \reff{lpr} is fulfilled.
By Lemma~\ref{equiv1}, the problem  (\ref{eq:1})--(\ref{eq:in}) is robust FTS. 

{\it Necessity.} Assume that the problem  (\ref{eq:1})--(\ref{eq:in}) is robust FTS.
By Corollary  \ref{sll1}, all principal minors of the matrix $P$
are equal to zero. The acyclicity of $G_P$ follows from the following lemma.

\begin{lemma}\label{cl8} $G_P$ is  acyclic 
	if and only if
	all principal minors of the matrix $P$
	equal  zero.
\end{lemma}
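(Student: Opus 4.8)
The plan is to work from the Leibniz expansion of the determinant together with the classical dictionary between nonzero terms of a principal minor and vertex-disjoint unions of cycles in $G_P$. For an index set $S\subseteq\{1,\dots,n\}$ I would write $\det(P_S)=\sum_{\sigma}\sgn(\sigma)\prod_{i\in S}p_{i\sigma(i)}$, the sum running over all permutations $\sigma$ of $S$. A single term survives exactly when $p_{i\sigma(i)}\neq 0$ for every $i\in S$, i.e. when each pair $(i,\sigma(i))$ is an arrow of $G_P$. Decomposing $\sigma$ into its disjoint cycles (a fixed point counting as a length-one cycle, namely a self-loop), a surviving term thus forces every cycle of $\sigma$ to be a cycle of $G_P$ in the sense of the paper's definition. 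Since a permutation of a nonempty set always contains at least one cycle, this correspondence drives both implications.

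For the forward direction I would argue directly. If $G_P$ has no cycle, then for every nonempty $S$ and every $\sigma$ the cycle decomposition of $\sigma$ would exhibit a cycle of $G_P$, which is impossible; hence every term of $\det(P_S)$ vanishes and the minor is zero. For $|S|=1$ this merely records that acyclicity forbids self-loops, so $p_{jj}=0$.

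The substantive direction is the converse, and its main obstacle is cancellation: the existence of a cycle produces one nonzero term in the corresponding principal minor, but other permutations on the same index set may contribute terms that cancel it. I would circumvent this by choosing a cycle $C=(k_1,\dots,k_l)$ of \emph{minimal} length and setting $S=\{k_1,\dots,k_l\}$. Minimality forces $C$ to be induced: a self-loop at some $k_i$ (a length-one cycle), or any chord of $C$, meaning an arrow $(k_i,k_j)$ inside $S$ with $i\neq j$ and $k_j\neq k_{i+1}$ (indices read mod $l$), would together with the appropriate arc of $C$ close up into a strictly shorter cycle, contradicting minimality. Consequently the subgraph of $G_P$ induced on $S$ is precisely the directed $l$-cycle, so the only arrow leaving $k_i$ inside $S$ is $(k_i,k_{i+1})$.

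Once $C$ is induced, the Leibniz expansion of $\det(P_S)$ collapses. Any permutation $\sigma$ of $S$ with a surviving term must send each $k_i$ to the unique vertex $k_{i+1}$ reachable inside $S$, so $\sigma$ is the single $l$-cycle $(k_1\,k_2\,\cdots\,k_l)$ and is the only nonzero term. Hence $\det(P_S)=\pm\,p_{k_1k_2}p_{k_2k_3}\cdots p_{k_lk_1}\neq 0$, a nonvanishing principal minor; for $l=1$ one has directly $\det(P_S)=p_{k_1k_1}\neq 0$. By contraposition, vanishing of all principal minors forces $G_P$ to be acyclic, which completes the equivalence.
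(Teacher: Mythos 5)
Your proof is correct and follows essentially the same route as the paper: both directions rest on the Leibniz expansion and the dictionary between surviving terms and cycles of $G_P$, and the decisive observation in the converse --- a minimum-length cycle has no chords or self-loops, so the principal minor on its vertex set has exactly one surviving permutation, namely the cycle itself --- is exactly the paper's key step. The only difference is organizational: the paper reaches this chord-free configuration by induction on $n$ (Case 1 restricts to the vertex set of a shorter cycle, Case 2 handles a spanning cycle with no shorter ones), whereas you select a minimal cycle outright and restrict to it, which slightly streamlines the same argument.
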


\begin{proof}  {\it Necessity.}
	Assume that $G_P$ is acyclic. We first show that $\det P=0$. Indeed,
	\beq\label{vanish0}
	\det P=\sum\limits_{\sigma}\sgn(\sigma)p_{1\sigma(1)}p_{2\sigma(2)}
	\cdots p_{n\sigma(n)}=\sum_{\sigma} s(\sigma),
	\ee
	the sum being taken over all
	permutations $\sigma$ of the set $\{1,2,\dots,n\}$.
	Consider a 	permutation~$\sigma$. This permutation decomposes
	into the product of independent
	cycles. Let $(j_1,\dots,j_l)$ be one of them. The sequence  $j_1,\dots,j_l$ does not form a cycle in $G_P$,
	which means that there exists $s\le l$ such that $(j_s,j_{s\oplus 1})$ is not a directed edge of $G_P$ , where $\oplus$ is addition modulo $l$,
	that is $p_{j_s,j_{s\oplus 1}}=0$.
	It follows that $ p_{j_1j_2}p_{j_2j_3}\cdots p_{j_{l}j_{1}}=0$.
	Therefore, $s(\sigma)=0$ for each $\sigma$, implying that $\det P=0.$
	
	To finish this part of the proof, it remains to note that every
	principal minor of  $P$  determines the adjacency matrix
	of a  subgraph of  $G_P$. As all subgraphs of a directed acyclic graph
	are  acyclic, the same argument as above implies that all principal minors of $P$ are equal to zero.
	
	{\it Sufficiency.} We prove that, if $G_P$ has a cycle, then $P$ has a nonzero principle minor.
	The proof is by induction on $n$.
	
	The base case $n=1$ is trivial: If $G_P$ is a cycle, then $w_{11}=1$ and, hence, $p_{11}\ne 0$.
	
	Assume that the claim is true for all matrices of order $1,\dots,n-1$. Given a matrix $P$ of order $n$,
	consider two cases.
	
	Case 1: $G_P$ has a cycle  $j_1,\dots,j_l$ of length $l<n$. Let $G^\prime$ be a subgraph of $G_P$ induced
	on the vertices $j_1,\dots,j_l$, and $P^\prime$ be the restriction of $P$ to the rows  and columns with indices
	$j_1,\dots,j_l$. Since $G^\prime$ has a cycle, the induction assumption implies that $P^\prime$
	has a nonzero principle minor, which is also a principle minor  of $P$.
	
	Case 2: $G_P$ has a cycle  $C$ of length $n$ and does not have any shorter cycles.
	Without loss of generality we can assume that $C$ is formed by the vertices $1,2,\dots,n$ in this order.
	Note that $C$ is  a unique cycle  of length $n$ in $G_P$. Indeed, any other cycle would contain an arrow $(i,j)$ absent in $C$. This is, however, impossible because this arrow would form a shorter cycle along with a part of $C$.
	
	Let us prove that $\det P\ne 0$.
	If   a permutation
	$\sigma$ in the equality \reff{vanish0} is not cyclic, then   $s(\sigma)=0$
	as in the necessity part. If $\sigma$ is cyclic but different from $(1,2,\dots,n)$,
	then $s(\sigma)=0$
	as well, because $s(\sigma)$ contains a factor $p_{ij}=0$ for an arrow $(i,j)$ not in $C$. Therefore,
	$\det P=s(\sigma)$ for $\sigma=(1,2,\dots,n)$, that is, 
	$\det P=\sgn(\sigma)p_{12}p_{23}\cdots p_{n1}\ne 0$. The proof is therewith complete.
\end{proof}

\begin{cor}\label{cor1}
	A constant zero-one  $n\times n$-matrix $W=(w_{jk})$ is the adjacency matrix
	of an acyclic directed graph  if and only if
	all principal minors of $W$ are equal to zero.
\end{cor}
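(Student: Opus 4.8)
The plan is to obtain this statement as an immediate specialization of Lemma~\ref{cl8}. The only point requiring attention is the identification of the graph named in the corollary with the graph $G_P$ from the lemma for a suitable choice of the matrix $P$.

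First I would apply Lemma~\ref{cl8} with $P=W$. Since $W=(w_{jk})$ is a zero-one matrix, each entry satisfies $w_{jk}\in\{0,1\}$, and hence $\sgn|w_{jk}|=w_{jk}$. Consequently, the matrix $(\sgn|p_{jk}|)$ that defines the adjacency structure of $G_P$ in the construction of the paper equals $W$ itself when $P=W$. In other words, the graph $G_W$ built from $W$ via the paper's recipe has an arrow $(j,k)$ exactly when $w_{jk}=1$, which is precisely the statement that $W$ is the adjacency matrix of $G_W$. Thus the (unique) directed graph whose adjacency matrix is $W$ — the object referred to in the corollary — coincides with $G_W$.

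With this identification in hand, I would simply invoke Lemma~\ref{cl8}, which asserts that $G_W$ is acyclic if and only if all principal minors of $W$ vanish. Since $G_W$ is exactly the directed graph with adjacency matrix $W$, this is the claimed equivalence, and the corollary follows at once.

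There is essentially no obstacle here: the content of the corollary is already contained in Lemma~\ref{cl8}, and the proof reduces to the elementary remark $\sgn|w_{jk}|=w_{jk}$, valid for zero-one entries. The only subtlety worth recording is that the corollary's phrasing ``$W$ is the adjacency matrix of an acyclic directed graph'' is equivalent to ``the directed graph with adjacency matrix $W$ is acyclic'', which is immediate because a zero-one matrix determines its directed graph uniquely.
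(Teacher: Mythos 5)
Your proof is correct and matches the paper's intent exactly: the paper states Corollary~\ref{cor1} as an immediate consequence of Lemma~\ref{cl8}, and the intended derivation is precisely your specialization $P=W$, where the observation $\sgn|w_{jk}|=w_{jk}$ for zero-one entries identifies $G_W$ with the directed graph whose adjacency matrix is $W$. Nothing is missing.
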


\subsection{Algebraic  criterion} \label{sec:alcr}
\subsubsection{Proof of Theorem \ref{prl}}
Claims  $(\io)$ and $(iv)$ are equivalent accordingly to Theorem \ref{tm_5} and
Proposition \ref{prl0}. The equivalence of Claims  $(\io)$
and $(\io\io\io)$ follows from
Theorem \ref{tm_5} (see also Corollary \ref{cor1}).

To finish the proof, let us show the equivalence of Claims $(\io)$ and  $(\io\io)$. Assume to the contrary that
the problem  \reff{eq:1}--\reff{eq:in} is robust FTS and there exists a nonzero product of the type $w_{i_1i_2}w_{i_2i_3}\cdots w_{i_{n}i_{n+1}}$ where
$\left(i_1,i_2,\dots,i_{n},i_{n+1}\right)\in\left\{1,\dots,n\right\}^{n+1}$.
Since $W$ is an adjacency matrix of a directed graph $G_P$, then from the definition of an adjacency matrix we conclude
that then there is a cycle in $G_P$.
In other words, $G_P$ is not  acyclic, contradicting Theorem~\ref{tm_5}.

\subsubsection{Proof of
	Corollary~\ref{cortime}}
Similarly to the proof of  Lemma \ref{equiv1}, 
if  $W^{k_0}=0$, then
$$
((SR)^{k_0}u)(x,t)=0\quad  \mbox{for all}\,\,  t> k_0/{a_0} \mbox{ and } u\in C(\Pi)^n.
$$
Taking this into account,  Part $(\io)$ readily follows  from 
\reff{eq:SR} with $l=k_0$. 

Part $(\io\io)$ for $k_0=1$ straightforwardly follows from the 
solution formula \reff{Q} and the fact that  $p_{jk}=0$
for all $j\le n$ and $k\le n$ whenever $k_0=1$.

Now, let $k_0\ge 2$ and  prove that $T_{opt}\le T^*$.
We 
rewrite the problem \reff{eq:1}--\reff{eq:in} 
in an equivalent form. 
Since the coefficients $a_j$ do not depend on $t$, the characteristic curves 
are well defined for each $t\in\R$. 
We  extend the functions $b_j(x,t)$ 
by setting $b_j(x,t)\equiv 0$ for $t<0$. 
In the domain $\Pi$, consider 
the problem \reff{eq:1}, \reff{eq:3} with
the  delay boundary 
conditions
\beq\label{eq:inb}
u_j(x_j,t)=\vphi_j(\sigma_j(0,x_j,t)),\quad t\in[-\tau_j,0],\ j\le n,
\ee
where $\tau_j$ are defined by \reff{nn3}, $x_j=0$ for $j\le m$ and $x_j=1$ for $m+1<j\le n$,
and the map $\tau\mapsto \sigma_j(\tau,x_j,t)$ 
is the inverse  of the characteristic $\xi\mapsto\om_j(\xi,x_j,t)$. 

Note that the functions $c_j(x_j,x,t)$ are continuous in $(x,t)\in\Pi$. 
In the framework of the method of  characteristics, any continuous solution $u$ to the problem  \reff{eq:1}, \reff{eq:3},
\reff{eq:inb} in $\Pi$ is determined by the formula
\begin{equation}
\begin{array}{cc}
\label{Qb}
u_j(x,t)=\left\{\begin{array}{lcl}
\displaystyle c_j(x_j,x,t) \left(Ru\right)_j(\om_j(x_j,x,t))
& \mbox{if}&  \om_j(x_j,x,t)\ge 0
\\
c_j(x_j,x,t)\vphi_j(\sigma_j(0,x_j,\om_j(x_j,x,t)))
& \mbox{if}& \om_j(x_j,x,t)< 0,
\end{array}\right.
\end{array}
\end{equation}
where $j\le n$.
Since  $b_j\equiv 0$ for $t<0$, we have the obvious equality
$c_j(x_j,x,t)=c_j(x_j(x,t),x,t)$ for all $(x,t)\in\Pi$. Hence, the functions
$u_j$ given by \reff{Qb} do not depend on $b_i$ for $t<0$ and $i\le n$. 
Now, we conclude that 
the  formula  \reff{Qb} for continuous solutions to
the problem \reff{eq:1}, \reff{eq:3},
\reff{eq:inb}
is   an equivalent form of  \reff{Q}  for continuous solutions to
the problem \reff{eq:1}--\reff{eq:in}.

Suppose that $k_0=3$ (the same argument works for any other $k_0\ge 2$).
Let us introduce the following notation:
$$
\begin{array}{rcl}
y_j&=&1-x_j,\\ [1mm]
\displaystyle t_{jki}(x)&=&\displaystyle \int_{x_j}^x\frac{dx}{a_{j}(x)}+
\int_{0}^{1}\frac{dx}{|a_{k}(x)|}+\int_{0}^{1}\frac{dx}{|a_{i}(x)|},\\ [5mm]
\displaystyle c_{jki}(x,t)&=&\displaystyle c_j(x_j,x,t)\,c_k(x_k,y_k,\om_j(x_j,x,t))
\,c_i(x_i,y_i,\om_k(x_k,y_k,\om_j(x_j,x,t))).
\end{array}
$$
Notice that $c_{jki}(x,t)\ne 0$ for all $(x,t)\in\Pi$ and $j,k,i\le n$. 

To prove that $T_{opt}\le T^*$, it  suffices to show that for  $u$ given by \reff{Qb} it holds
$u_j\equiv 0$ for all $t\ge T^*$ and $j\le n$.
Let $t\ge T^*$ be arbitrary fixed. We have
$t-t_{jki}(x)\ge 0$ for all $(j,k,i)\in I$ and $x\in[0,1]$. For every
$j\le n$ and $x\in[0,1]$, the value $u_j(x,t)$ can be computed by iterating  three times the first line of the
formula~ \reff{Qb}. We, therefore, get
\beq\label{p00}
\begin{array}{ll}
	u_j(x,t)=\left[(SR)^3u\right]_j(x,t)
	=
	\displaystyle\sum_{	\substack{k,i,s=1\\ (j,k,i)\in I}}^np_{jk}p_{ki}p_{is}\,c_{jki}(x,t)\,
	u_s\left(x_s,t-t_{jki}(x)
	\right),
\end{array}
\ee
where all products $p_{jk}p_{ki}p_{is}$ equal zero by the assumption. Hence, 
$u_j(\cdot,t)\equiv 0$, as desired.

Finally, let $k_0=3$  and show that $T_{opt}=T^*$
in this case  (the same argument works  for $k_0=2$).
We are done if we prove  that  for any $\eps\in(0,\min_{j\le n}{\tau_j})$ there exists a function $\vphi$ and an index $j\le n$ such that $u_j(y_j,T^*-\eps)\ne 0$, where $u_j$ is given by \reff{Qb}.
Fix an arbitrary $\eps\in(0,\min_{j\le n}{\tau_j})$. 
Due to the definition of $T^*$, there exists a triple, say $(\al,\be,\ga)\in I$,
such that $T^*=t_{\al\be\ga}(y_\al)$.
Since
$t_{jki}(y_j)\le T^*$,
we have $T^*-\eps-t_{jki}(y_j)>-\min_{j\le n}{\tau_j}$
for all $(j,k,i)\in I$.
Moreover,
$T^*-\eps-t_{\al\be\ga}(y_\al)< 0.
$ This means that we can compute 
the value $u_\al(y_\al,T^*-\eps)$ by
iterating twice the  first line of the formula  \reff{Qb}   and  applying once the operator~$S$ of integration along characteristics.
Specifically, 
\beq\label{p0}
\begin{array}{l}
	u_\al(y_\al,T^*-\eps)
	=\left[(SR)^2Su\right]_\al(y_\al,T^*-\eps)\\
	\qquad\qquad\qquad\ \,	=
	\displaystyle\sum_{
		\substack{		k,i=1\\ (\al,k,i)\in I}}
	^np_{\al k}p_{ki}\,c_{\al ki}(y_\al,T^*-\eps)\,
	u_i(x_i,T^*-\eps-t_{\al ki}(y_\al)).
\end{array}
\ee
Consider an arbitrary pair $(k,i)$ such that
$(\al,k,i)\in I$ and
$T^*-\eps-t_{\al ki}(y_\al)\ge 0$. The value
$u_i(x_i,T^*-\eps-t_{\al ki}(y_\al))$
can be computed using the first line of  \reff{Qb}.
Doing so, we see that
the 
corresponding summand in \reff{p0} is equal to zero. 
If $(\al,k,i)\in I$ but
$T^*-\eps-t_{\al ki}(y_\al)< 0$, which includes the pair$(k,i)=(\be,\ga)$, then
the corresponding  summand in
\reff{p0} can be computed using the second line of  \reff{Qb}. 
We now choose $\vphi$ in \reff{Qb}
such that, for all triples  $(\al,k,i)$ of this kind except $(\al, \be,\ga)$
we have $\vphi_{i}(\sigma_i(0,x_\ga,T^*-\eps-t_{\al ki}(y_\al)))=0$,
while 
$\vphi_{\ga}(\sigma_\ga(0,x_\ga,T^*-\eps-t_{\al \be\ga}(y_\al)))\ne 0$.
This is possible, since, by our construction,  $0<\sigma_\ga(0,x_\ga,T^*-\eps-t_{\al \be\ga}(y_\al))<1$ and there is no  triple
$(\al,k,\ga)\in I$ such that $k\ne\be$.
This completes the proof.

\subsubsection{Proof of Theorem \ref{prl1}}
The equivalence of $(\io)$ and  $(\io\io)$ follows from Theorem \ref{prl} and the obvious property
$$
w_{i_1i_2}w_{i_2i_3}\cdots w_{i_{n}i_{n+1}}=0\ \mbox{  if and only if }\ p_{i_1i_2}p_{i_2i_3}\cdots p_{i_{n}i_{n+1}}=0.
$$

Assume that the problem  (\ref{eq:1})--(\ref{eq:in}) is robust FTS.
For given $i,j \le n$, the $ij$-th element $(P^n)_{ij}$ of the matrix $P^n$
is expressed by the formula
\begin{equation}\label{kl91}(P^n)_{ij}= \sum_{i_2=1}^n p_{ii_2}\sum_{i_3=1}^n p_{i_2i_3}
\dots\sum_{i_{n}=1}^n p_{i_{n}j}=
\sum_{i_2,i_3,\dots,i_{n}=1}^n p_{ii_2}p_{i_2i_3}\cdots p_{i_{n}j}.
\end{equation}
Due to Claim $(\io\io)$, all summands in the right hand side equal zero. In other words,
the matrix $P$ is nilpotent. This yields that the matrix $P_{abs}$ of the absolute values of
$p_{ij}$, namely $(P_{abs})_{ij}=|p_{ij}|$, is nilpotent as well.  The implication
$(\io)\Rightarrow(\io v)$ is, therefore, proved. The  implication
$(\io v)\Rightarrow(\io)$ follows from Claim $(\io\io)$,  the formula \reff{kl91},  and the property
that
$$
P_{abs}^n=0\  \ \mbox{  if and only if  }\  \ |p_{i_1i_2}||p_{i_2i_3}|\cdots |p_{i_{n}i_{n+1}}|=0 
$$
for all 
tuples  $\left(i_1,i_2,\dots,i_{n},i_{n+1}\right)\in\left\{1,\dots,n\right\}^{n+1}.$

Finally, combining Theorem \ref{tm_5} with Lemma \ref{cl8}, we conclude that
$(\io)$ and  $(\io\io\io)$ are equivalent. 

\subsubsection{Proof of Theorem \ref{ss22}}\label{nona}

We use  the following statement,
which is proved similarly to Lemma \ref{equiv1}.
\begin{lemma}\label{equiv2} Let $n\ge 2$,
	$W=(w_{jk})$ be a  constant zero-one $n\times n$-matrix, and 
	$q_{jk}\in C^1(\R_+)$, where 
	$j,k\le n$. 	
	Assume that 
	$$
	w_{i_1i_2}w_{i_2i_3}\cdots w_{i_{n}i_{n+1}}=0 \quad \mbox{for all tuples }
	\left(i_1,i_2,\dots,i_{n},i_{n+1}\right)\in\left\{1,\dots,n\right\}^{n+1}.
	$$
	Then 	the problem \reff{eq:1}--\reff{eq:in} with
	$p_{jk}=q_{jk}(t)w_{jk}$	
	is robust finite time stabilizable.
\end{lemma}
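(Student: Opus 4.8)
The plan is to follow the proof of Lemma~\ref{equiv1} almost line by line, the single structural change being that the boundary operator $R$ now carries an explicit time dependence through the coefficients $p_{jk}(t)=q_{jk}(t)w_{jk}$. First I would note that, exactly as in the autonomous case, a continuous solution to \reff{eq:1}, \reff{eq:3}, \reff{eq:in} with these time-dependent boundary coefficients still satisfies the functional system \reff{Q}, now with the time-dependent $R$, and hence obeys the iteration identity
$$
u(x,t)=\left[(SR)^lu\right](x,t)\quad\mbox{for all } t>l/a,\ l\in\N.
$$
Since the hypothesis on $W$ involves neither $a_j$, $b_j$ nor the functions $q_{jk}$, it suffices to prove the analogue of \reff{C0}, namely
$$
\left[(SR)^nu\right](x,t)\equiv 0\quad\mbox{for all } t>n/a \mbox{ and } u\in C(\Pi)^n,
$$
which yields robust FTS with stabilization time $n/a$.

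I would establish this by expanding $(SR)^nu$ explicitly, just as \reff{*2} and \reff{re} do for $n=2$ and $n=3$. Each component $\left[(SR)^nu\right]_j(x,t)$ is a finite sum over tuples $(i_1,\dots,i_{n+1})$ with $i_1=j$, and each summand is a product of the shape
$$
p_{i_1i_2}(t_1)\,p_{i_2i_3}(t_2)\cdots p_{i_ni_{n+1}}(t_n)\cdot c(x,t)\cdot u_{i_{n+1}}(\cdot),
$$
where the arguments $t_1,\dots,t_n$ are the successive times obtained by composing the characteristic shifts $\om$, and $c(x,t)\ne 0$ is the product of exponential weights along the characteristics. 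The only departure from Lemma~\ref{equiv1} is that the $p$-coefficients are evaluated at these different times $t_s$ rather than being constants.

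The decisive step — and the only place the hypothesis is used — is that each factor splits as $p_{i_si_{s+1}}(t_s)=q_{i_si_{s+1}}(t_s)\,w_{i_si_{s+1}}$, so that every summand contains the factor
$$
w_{i_1i_2}w_{i_2i_3}\cdots w_{i_ni_{n+1}},
$$
which vanishes by assumption for every tuple $(i_1,\dots,i_{n+1})\in\{1,\dots,n\}^{n+1}$. Hence each summand is zero irrespective of the values the continuous functions $q_{jk}$ take at the (immaterial) times $t_s$, giving $(SR)^nu\equiv 0$ for $t>n/a$.

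I do not expect a genuine obstacle. The $C^1$-regularity of the $q_{jk}$ is needed only to guarantee that $R$ maps $C(\R_+)^n$ into itself and that the smooth solution theory behind \reff{eq:SR} applies, while the time dependence of the $p$-coefficients is harmless precisely because the vanishing is driven solely by the $w$-factors. The mildly tedious point is the bookkeeping of the nested time-arguments $t_s$ in the expansion of $(SR)^n$; but, as in Lemma~\ref{equiv1}, these never affect whether a summand vanishes, so it is enough to display the pattern for $n=2,3$ and remark that the same argument works for every $n$.
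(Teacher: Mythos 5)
Your proposal is correct and matches the paper's intent exactly: the paper gives no separate argument for Lemma~\ref{equiv2}, stating only that it ``is proved similarly to Lemma~\ref{equiv1}'', and your write-up is precisely that adaptation, with the key observation that each summand in the expansion of $(SR)^n u$ carries the factor $w_{i_1i_2}\cdots w_{i_ni_{n+1}}=0$ regardless of the times at which the $q_{jk}$ are evaluated.
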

Since the matrix $W$ satisfies the condition 
$(\io\io)$ of Theorem \ref{prl},
the sufficiency part follows.  Since the problem is FTS for every $P(t)$, 
let us fix $P(t)$ such that $P(t)$ does not depend on $t$. The necessity part
follows now  from Theorem~ \ref{prl1}.

\begin{rem}
	The proof of Theorems \ref{prl},  \ref{prl1},  and \ref{ss22} use  robustness in an essential way. Note that the robust criteria are formulated in terms of the boundary coefficients only.   The FTS concept is technically  more complicated. Indeed, even in the autonomous case, 
	the  spectral criterion  shows that an FTS criterion
	can hardly be expected  to be formulated by means of the boundary coefficients only.
\end{rem}

\subsection*{Acknowledgements}
We thank Oleg Verbitsky for  graph-theoretic comments.

Irina Kmit was supported by the VolkswagenStiftung Project ``From Modeling and Analysis to Approximation''. 	Natalya Lyul'ko was supported by the state contract
of the 	Sobolev Institute of Mathematics, 
Project No. 0314-2019-0012.  


\end{document}